\newtheorem{theorem}{Theorem}[section]
\newenvironment{theorembis}[1]
  {\innertheorembis}
  {\endinnertheorembis}
\newtheorem{lemma}[theorem]{Lemma}
\newtheorem{definition}[theorem]{Definition}
\let\olddefinition\definition
\renewcommand{\definition}{\olddefinition\normalfont}
\newtheorem{corollary}[theorem]{Corollary}
\newtheorem{proposition}[theorem]{Proposition}
\newcommand{\CCC}{CAT(0) cube complex\xspace}
\newcommand{\R}{\mathbb R }
\newcommand{\Z}{\mathbb Z }
\newcommand{\inter}[2]{[#1 #2]}
\newcommand{\adjP}{adj-P\xspace}
\newcommand{\blockingpair}{p.o.p\xspace}
\newcommand{\blockingpairs}{p.o.p\xspace}
\newcommand{\actson}{\curvearrowright} 
\newcommand{\Stab}{\mathrm{Stab}} 
\newcommand{\homeo}{\cong} 
\newcommand{\uc}[1]{\tilde{#1}} 
\newcommand{\gp}[1]{#1} 
\newcommand{\gpelt}[1]{#1} 
\newcommand{\mfld}[1]{#1} 
\newcommand{\smfld}[1]{#1} 
\newcommand{\CC}[1]{\mathbf{#1}} 
\newcommand{\CCv}[1]{\mathbf{#1}} 
\newcommand{\CCc}[1]{\mathbf{#1}} 
\newcommand{\itvl}[1]{#1} 
\newcommand{\cCC}[1]{{\mathbf{#1}^c}} 
\newcommand{\cCCv}[1]{\mathbf{#1}^c} 
\newcommand{\cCCc}[1]{\mathbf{#1}^c} 
\newcommand{\fCC}[1]{{\mathbf{#1}^f}} 
\newcommand{\fCCv}[1]{\mathbf{#1}^f} 
\newcommand{\fCCc}[1]{\mathbf{#1}^f} 
\newcommand{\oCC}[1]{{\mathbf{#1}^o}} 
\newcommand{\oCCv}[1]{\mathbf{#1}^o} 
\newcommand{\Hyp}[1]{\hat{\mathcal{#1}}} 
\newcommand{\hyp}[1]{\hat{\mathfrak{#1}}} 
\newcommand{\Hypmap}[1]{\hat{#1}_* } 
\newcommand{\Hs}[1]{\mathcal{#1}} 
\newcommand{\hs}[1]{\mathfrak{#1}} 
\newcommand{\comp}[1]{{#1}^*} 
\newcommand{\Hsmap}[1]{#1_*} 
\newcommand{\cHyp}[1]{\hat{\mathcal{#1}}^c} 
\newcommand{\chyp}[1]{\hat{\mathfrak{#1}}^c} 
\newcommand{\cHs}[1]{\mathcal{#1}^c} 
\newcommand{\chs}[1]{\mathfrak{#1}^c} 
\newcommand{\fHyp}[1]{\hat{\mathcal{#1}}^f} 
\newcommand{\fhyp}[1]{\hat{\mathfrak{#1}}^f} 
\newcommand{\fHs}[1]{\mathcal{#1}^f} 
\newcommand{\fhs}[1]{\mathfrak{#1}^f} 
\newcommand{\oHyp}[1]{\hat{\mathcal{#1}}^o} 
\newcommand{\ohyp}[1]{\hat{\mathfrak{#1}}^o} 
\newcommand{\oHs}[1]{\mathcal{#1}^o} 
\newcommand{\ohs}[1]{\mathfrak{#1}^o} 
\newcommand{\simp}[1]{#1} 
\newcommand{\simpv}[1]{#1} 
\newcommand{\simpe}[1]{#1} 
\newcommand{\simpf}[1]{#1} 
\newcommand{\simps}[1]{#1} 
\newcommand{\usimp}[1]{\tilde{#1}} 
\newcommand{\usimpv}[1]{\tilde{#1}} 
\newcommand{\usimpe}[1]{\tilde{#1}} 
\newcommand{\usimpf}[1]{\tilde{#1}} 
\newcommand{\trk}[1]{#1} 
\newcommand{\ptrn}[1]{{\mathcal{#1}}} 
\newcommand{\utrk}[1]{{\tilde{#1}}} 
\newcommand{\uptrn}[1]{{\tilde{\mathcal{#1}}}} 
\title{Resolutions of CAT(0) cube complexes and accessibility properties}
\author{Benjamin Beeker\thanks{Supported in part by a Technion fellowship} ~and Nir Lazarovich\thanks{Supported by the Adams Fellowship Program of the Israel Academy of Sciences and Humanities}}
\date{} 
\begin{document}
\maketitle

\begin{abstract}
In \cite{Dun85}, Dunwoody defined resolutions for finitely presented group actions on simplicial trees, that is, an action of the group on a tree with smaller edge and vertex stabilizers. He, moreover, proved that the size of the resolution is bounded by a constant depending only on the group. Extending Dunwoody's definition of patterns we construct resolutions for group actions on a general finite dimensional \CCC. In dimension two, we bound the number of hyperplanes of this resolution. We apply this result for surfaces and 3-manifolds to bound collections of codimension-1 submanifolds.
\end{abstract}


\section{Introduction}

An important aspect of group actions on trees is Dunwoody's theory of accessibility (see \cite{Dun85}), and, in particular, finding bounds for ``reasonable'' actions on trees. The earliest result in this direction is Grushko's decomposition theorem (see \cite{Gru40}), that implies in particular that there is a bound, depending only on the rank of the group $\gp{G}$, on the number of edge-orbits in a $\gp{G}$-tree with trivial edge stabilizers. An analogous result for 3-manifolds, known as the Kneser prime decomposition theorem (see \cite{Kne29} and \cite{Mil62}) implies that there is a bound, depending only on the compact 3-manifold $\mfld{M^3}$, on the number of embedded, essential, disjoint, non-homotopic spheres in $\mfld{M}$.
In fact, Haken proved that there is a bound on any collection of such $2$-sided subsurfaces (not necessarily spheres) assuming further that the subsurfaces are incompressible and that the manifold is incompressible (see \cite{Hak61}). 

Grushko's result could be seen as a first result towards Dunwoody's accessibility theorem. As part of the proof, Dunwoody introduced two key tools: patterns and resolutions. 
He observed that any action of an almost finitely presented group, $\gp{G}$, on a tree could be resolved to a $\gp{G}$-tree obtained from a geometric pattern on the universal cover of the presentation complex of $\gp{G}$. This resolution is simpler in certain aspects, e.g, the edge stabilizers are finitely generated and one can bound the number of parallelism classes of edges in the resolution. This result is known as Dunwoody's Lemma (\cite[Lemma 4.4]{Dun85}).

Sageev's seminal work on ends of group pairs (see \cite{Sag95}) demonstrated how CAT(0) cube complexes could be used to generalize known results about group actions on trees.
In this paper we aim to generalize Dunwoody's ideas to the realm of cube complexes.

In Section \ref{resolutions}, we construct resolutions for cube complexes and prove the following. 

\begin{theorem}\label{1.1}
Let $\gp G$ be a finitely presented group acting on a $d$-dimensional \CCC $\CC X$. There exists a $d$-dimensional \CCC and a $G$-equivariant map $F:\CC{X}'\to \CC{X}$ with the following properties:
\begin{itemize}
\item The hyperplane stabilizers in $\CC{X}'$ are finitely generated. 
\item Cube fixators and hyperplane stabilizers in $\CC{X}'$ are contained in those of $\CC X$.
In particular, if the action $\gp G \actson \CC{X}$ is free or proper
then so is $\gp G \actson \CC{X}'$.
\end{itemize}
If moreover $d\le 2$ then the action of $\gp G$ on $\CC{X}' $ is cocompact.
\end{theorem}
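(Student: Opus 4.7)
The plan is to generalize Dunwoody's resolution-via-patterns construction, substituting preimages of hyperplanes for tracks and invoking Sageev's dual cube complex construction. First I would fix a finite presentation 2-complex $K$ with $\pi_1(K) = \gp{G}$ and universal cover $\tilde{K}$, then build a $\gp{G}$-equivariant combinatorial map $\phi : \tilde{K} \to \CC{X}$ by sending a finite set of vertex-orbit representatives into $\CC{X}^{(0)}$, extending to edges along combinatorial paths, and filling in the finitely many 2-cell orbits using the simple connectedness of $\CC{X}$. After a small transverse adjustment, the preimage $\phi^{-1}(\hat{h})$ of each hyperplane of $\CC{X}$ will be a disjoint union of two-sided tracks in $\tilde{K}$; taking the union over all hyperplanes yields a $\gp{G}$-invariant pattern on $\tilde{K}$.

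Next I would turn this pattern into a \CCC via Sageev duality. The connected components of the preimages serve as walls in $\tilde{K}$, endowing $\tilde{K}$ with a $\gp{G}$-invariant wallspace structure. Applying Sageev's construction produces a \CCC $\CC{X}'$ whose hyperplanes correspond $\gp{G}$-equivariantly to parallelism classes of these walls, and the assignment ``wall $\mapsto$ the hyperplane of $\CC{X}$ from which it was pulled back'' induces a $\gp{G}$-equivariant combinatorial map $F : \CC{X}' \to \CC{X}$. Because $\CC{X}$ has dimension $d$, no $(d+1)$-tuple of hyperplanes of $\CC{X}$ is pairwise crossing, and the same must therefore hold for the walls in $\tilde{K}$ (since their crossings push forward to crossings of hyperplanes in $\CC{X}$), forcing $\dim \CC{X}' \le d$.

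The stabilizer statements follow quickly from this pullback description: each hyperplane of $\CC{X}'$ is pulled back from a hyperplane of $\CC{X}$, so its stabilizer embeds into the stabilizer of the latter, and the argument for cube fixators is analogous using the combinatorics of corners and orientations. Finite generation of hyperplane stabilizers in $\CC{X}'$ would come from the fact that each such stabilizer acts cocompactly on the corresponding track, which is a subcomplex of $\tilde{K}$ of finite type modulo the stabilizer action since $K$ is finite.

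The hard part will be cocompactness when $d \le 2$. Cocompactness reduces to bounding the number of $\gp{G}$-orbits of parallelism classes of tracks in the pattern on $\tilde{K}$. When $d \le 2$ the tracks are $1$-dimensional submanifolds of the finite 2-complex $K$, and a Dunwoody-style combinatorial counting argument --- presumably the bound on hyperplanes of the resolution alluded to in the introduction --- should give the desired finiteness. In higher dimensions no such bound is available in general, which explains the restriction to $d \le 2$ in the cocompactness conclusion.
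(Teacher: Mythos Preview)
Your construction of the resolution and the verification of the stabilizer properties are essentially the paper's argument: fix a finite triangle complex $K$, build an equivariant map $\tilde K\to \CC X$, pull back hyperplanes to a $d$-pattern, and dualize via the pocset/Sageev machine to obtain $\CC X'$ together with an equivariant combinatorial map $F:\CC X'\to \CC X$. The dimension bound, finite generation of hyperplane stabilizers (via compactness of the quotient track in $K$), and the containment of stabilizers/fixators all go through as you describe.

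The cocompactness paragraph, however, contains a real gap. You assert that cocompactness reduces to bounding the number of $G$-orbits of (parallelism classes of) tracks, and propose a Dunwoody-style counting argument. But the pattern on $K$ is already finite by construction, so there are automatically only finitely many $G$-orbits of tracks; no counting is needed for that. The problem is that finitely many wall orbits does \emph{not} imply that the dual cube complex is $G$-cocompact. The paper exhibits exactly this failure: $\mathbb Z^2$ acting on $\mathbb R^2\times[0,1]$ yields a cofinite pattern on the torus with three parallelism classes, yet the dual complex is the standard cubulation of $\mathbb R^3$, on which $\mathbb Z^2$ cannot act cocompactly. So your proposed reduction is false, and the restriction to $d\le 2$ is not explained by any lack of a counting bound.

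The actual mechanism, which your outline misses, is that for $d\le 2$ every DCC ultrafilter on the fine halfspace pocset is \emph{principal}, i.e.\ corresponds to a region of $\tilde K\setminus\uptrn P$. Since $G$ acts cocompactly on regions (finitely many per triangle, finitely many triangle orbits), cocompactness on vertices of $\CC X'$ follows. The principality argument uses $2$-dimensionality in an essential way: given a vertex $\fCCv x$ and a minimal halfspace $\fhs h_{\utrk t}\in\fCCv x$, either $\utrk t$ crosses no other track (and then every other minimal halfspace contains $\utrk t$), or $\utrk t$ crosses some $\utrk u$ with $\fhs h_{\utrk u}$ also minimal, in which case no third track can cross both (by the $2$-pattern condition), so every minimal halfspace contains $\utrk t$ or $\utrk u$. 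Either way the intersection of minimal halfspaces is nonempty near $\utrk t$ or $\utrk t\cap\utrk u$. You are also conflating this with Theorem~A (the bound on parallelism classes), which is a separate, harder result used for Theorem~\ref{thm: bound on resolution}, not for Theorem~\ref{1.1}.
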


In order to construct the resolution we use a $d$-dimensional analogue of Dunwoody's patterns, called $d$-patterns, which we define in Section \ref{preliminaries}. 

In Section \ref{patterns} we restrict our attention mainly to square complexes, and obtain the following analogue of Dunwoody's Lemma.

\begin{theorembis}{A} \label{main thm}Let $\simp K$ be a 2-dimensional simplicial complex. Then there exists a constant $C$, depending only on $\simp K$, such that any $2$-pattern $\ptrn P$ on $\simp K$ has at most $C$ parallelism classes of tracks.
\end{theorembis}

From the above the following theorem is an immediate corollary.

\begin{theorem} \label{thm: bound on resolution} Let $\gp{G}$ be a finitely presented group. There exists a constant $C$ depending only on $G$ such that for every $\gp{G}$-action on a 2-dimensional \CCC $\CC{X}$ there exists a 2-dimensional, CAT(0) cube complex $\CC{X}'$ with the properties of Theorem \ref{1.1} and at most $C$ parallelism classes of hyperplanes. \qed
\end{theorem}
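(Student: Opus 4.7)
The plan is to combine Theorem \ref{1.1} with Theorem \ref{main thm}, since by the design of the resolution construction in Section \ref{resolutions} they fit together to yield the corollary directly. First, I would fix a finite presentation 2-complex $\simp K$ for $\gp G$, so that $\gp G$ acts freely and cocompactly on the universal cover $\usimp K$. Then I would recall that the resolution $\CC{X}'$ produced by (the proof of) Theorem \ref{1.1} is obtained from a $\gp G$-equivariant 2-pattern $\uptrn P$ on $\usimp K$, and that $\uptrn P$ descends to a 2-pattern $\ptrn P$ on the compact 2-complex $\simp K$.

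Next I would verify the correspondence between hyperplanes of $\CC{X}'$ and tracks of $\uptrn P$: by construction the hyperplanes of $\CC{X}'$ are in $\gp G$-equivariant bijection with the tracks of $\uptrn P$, and two hyperplanes of $\CC{X}'$ are parallel if and only if the corresponding tracks of $\uptrn P$ are parallel. By $\gp G$-equivariance this in turn is equivalent to their images being parallel as tracks of $\ptrn P$ in $\simp K$. Hence parallelism classes of hyperplanes of $\CC{X}'$ correspond to parallelism classes of tracks of $\ptrn P$ in the finite complex $\simp K$.

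Finally, I would apply Theorem \ref{main thm} to $\simp K$ to obtain a constant $C = C(\simp K)$ such that any 2-pattern on $\simp K$, and in particular $\ptrn P$, has at most $C$ parallelism classes of tracks. Through the correspondence above this bounds the number of parallelism classes of hyperplanes of $\CC{X}'$ by $C$. Since $\simp K$ depends only on a fixed finite presentation of $\gp G$, so does $C$, and the cube complex $\CC{X}'$ already enjoys the remaining properties listed in Theorem \ref{1.1}. The only delicate point—but not really an obstacle, since it is a structural feature of how cubes of $\CC{X}'$ arise from intersections of tracks with cells of $\usimp K$—is the parallelism correspondence identified in the previous paragraph.
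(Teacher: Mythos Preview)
Your proposal is correct and matches the paper's approach: the paper states the theorem as an immediate corollary of Theorem~\ref{1.1} and Theorem~\ref{main thm} and marks it with \qed, and your write-up simply unpacks that implication. The key link you identify---that the hyperplanes of the (fine) resolution $\CC{X}'$ correspond $\gp G$-equivariantly to tracks of $\uptrn P$, so that parallelism classes of hyperplanes correspond to parallelism classes of tracks of the induced pattern $\ptrn P$ on the finite complex $\simp K$---is exactly the content the paper leaves implicit.
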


In Section \ref{surfaces}, we turn to surfaces and 3-manifolds and prove the following 2-dimensional analogue of Haken's theorem.

\begin{theorem} \label{thm: bound on submanifolds} Let $\mfld M ^n$ be an $n$-dimensional ($n=2,3$) compact manifold. There exists a constant $C$, depending only on $\mfld M$, such that if $\ptrn S$ is a collection of non-homotopic, $\pi_1$-injective, co-dimension-1, 2-sided, embedded sub-manifolds, such that the size of a pairwise intersecting collection of lifts to $\uc{\mfld M}$ is at most 2, then $|\ptrn S|\le C$.
\end{theorem}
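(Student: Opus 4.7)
The strategy is to translate $\ptrn S$ into a cube-complex problem and apply Theorem \ref{thm: bound on resolution}. Set $\gp G = \pi_1(\mfld M)$, which is finitely presented because $\mfld M$ is compact. Lifting $\ptrn S$ to the universal cover $\uc{\mfld M}$ produces a $\gp G$-invariant collection $\uptrn S$ of properly embedded codimension-1 submanifolds; $\pi_1$-injectivity and 2-sidedness ensure that each component separates $\uc{\mfld M}$ into two halfspaces, giving a $\gp G$-invariant wall space. Sageev's dual construction then yields a CAT(0) cube complex $\CC X$ with a $\gp G$-action whose hyperplanes are in natural bijection with the components of $\uptrn S$. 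Since the elements of $\ptrn S$ are connected and pairwise non-homotopic, distinct elements correspond to distinct $\gp G$-orbits of components of $\uptrn S$, so $|\ptrn S|$ equals the number of $\gp G$-orbits of hyperplanes in $\CC X$.

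Next, note that the dimension of a dual CAT(0) cube complex equals the supremum of the sizes of pairwise-crossing families of hyperplanes, and two hyperplanes cross precisely when the corresponding walls in $\uc{\mfld M}$ intersect. The hypothesis that no three lifts are pairwise intersecting therefore gives $\dim \CC X \le 2$.

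Now Theorem \ref{thm: bound on resolution} produces a 2-dimensional CAT(0) cube complex $\CC{X}'$ and a $\gp G$-equivariant map $F : \CC{X}' \to \CC X$, where $\CC{X}'$ has at most $C = C(\gp G)$ parallelism classes of hyperplanes; and $C$ depends only on $\mfld M$ since $\gp G$ does. To conclude $|\ptrn S| \le C$, it then remains to bound the number of $\gp G$-orbits of hyperplanes in $\CC X$ by the number of parallelism classes of hyperplanes in $\CC{X}'$. The expected mechanism is that $F$ is surjective on hyperplanes and that two parallel hyperplanes in $\CC{X}'$ are sent to hyperplanes lying in a common $\gp G$-orbit in $\CC X$, so orbits in $\CC X$ are a quotient of parallelism classes in $\CC{X}'$.

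The main obstacle is the final comparison step: justifying that parallelism classes of hyperplanes in the resolution $\CC{X}'$ control $\gp G$-orbits of hyperplanes in $\CC X$. This demands unpacking the resolution construction of Section \ref{resolutions} to verify that $F$ is essentially surjective on hyperplanes and that hyperplane-parallelism in the resolution is respected by the orbit structure downstairs. A secondary, more technical point is to check that Sageev's construction applies in the required generality so that the correspondence between $\gp G$-orbits of hyperplanes and elements of $\ptrn S$ is indeed a bijection.
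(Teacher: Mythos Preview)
Your route through Sageev's construction and Theorem \ref{thm: bound on resolution} is not the paper's, and the gap you yourself flag in the last paragraph is genuine rather than cosmetic. Parallelism of hyperplanes in the resolution $\CC{X}'$ is a statement about how the associated tracks separate the \emph{vertex set} of the presentation complex $\usimp{K}$; it says nothing about which hyperplane of $\CC{X}$ a track came from. Two tracks pulled back from hyperplanes in distinct $\gp G$-orbits of $\CC{X}$ can perfectly well induce the same partition of $\usimp{K}^{(0)}$ and hence be parallel. So the ``expected mechanism'' --- that parallel hyperplanes of $\CC{X}'$ land in a common $\gp G$-orbit of $\CC{X}$ --- simply fails in general, and no amount of unpacking the construction of $F$ will produce it.

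The paper avoids this by never passing through an abstract presentation complex or Sageev's dual cube complex. Instead it fixes a triangulation $\simp K$ of $\mfld M$ and homotopes the submanifolds in $\ptrn S$ so that (for $n=2$) they form a $2$-pattern on $\simp K$, or (for $n=3$) their intersection with $\simp K^{(2)}$ is a $2$-pattern. Theorem \ref{main thm} is then applied directly to this pattern. The crucial point is that here parallelism has concrete geometric meaning: two parallel tracks cobound an $I$-bundle in $\mfld M$, and since the submanifolds are $2$-sided this forces them to be homotopic. Thus ``non-homotopic'' implies ``non-parallel'', and the bound on parallelism classes immediately bounds $|\ptrn S|$. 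Your detour through $\CC{X}$ loses exactly this geometric link between parallelism and homotopy.
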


We note that the question whether there is a bound, depending only on the dimension $d$ and the simplicial complex $\simp K$, for the number of parallelism classes of tracks in $d$-patterns is still open. An affirmative answer would imply analogous corollaries as above. 

We also note that the bound on the resolution was originally used to prove Dunwoody's Accessibility Theorem, but was also used in \cite{BeFe91} by Bestvina and Feighn to bound the number of edge-orbits in a reduced graph of group decompositions over small groups. We hope that our results will lead to analogous results for CAT(0) square complexes.

\paragraph*{Acknowledgements.}

The authors would like to thank Michah Sageev for his support and helpful comments.
The second author is also thankful for fruitful discussions with Mladen Bestvina and Dani Wise.


\section{Preliminaries }\label{preliminaries}

\subsection{CAT(0) cube complexes}
We begin by a short survey of definitions concerning CAT(0) cube complexes. For further details see, for example, \cite{Saa12}.

A \emph{cube complex} is a collection of euclidean cubes of various dimensions in which subcubes have been identified isometrically. 

A simplicial complex is \emph{flag} if every $(n+1)$-clique in its 1-skeleton spans a $n$-simplex.
A cube complex is \emph{non-positively curved} (NPC) if the link of every vertex is a flag simplicial complex. It is a \emph{\CCC} if moreover it is simply connected.

A cube complex $\CC{X}$ can be equipped with two natural metrics, the euclidean and the $L^1$-metric. With respect to the former $\CC{X}$ is NPC if and only if it is NPC \`{a} la Gromov (see \cite{Gro87}). While the latter is more natural to the combinatorial structure of CAT(0) cube complexes described below.

Given a cube $\CCc{C}$ and an edge $\CCc{e}$ of $\CCc{C}$. The midcube of $\CCc{c}$ associated to $\CCc{e}$ is the convex hull of the midpoints of $\CCc{e}$ and the edges parallel to $\CCc{e}$.
A \emph{hyperplane} is associated to $\CCc{e}$ is the smallest subset containing the midpoint of $\CCc{e}$ and such that if  contains a midpoint of an edge it contains all the midcubes containing it.
Every hyperplane $\hyp{h}$ in a \CCC $\CC{X}$ separates $\CC{X}$ into exactly two components (see \cite{NiRe98}) called \emph{halfspaces} the associated to $\hyp{h}$. A hyperplane can thus also be abstractly viewed as a pair of complementary halfspaces. 
The \emph{carrier} $N(\hyp{h})$ of $\hyp{h}$ is the union of the cubes intersecting $\hyp{h}$.
For a \CCC $\CC{X}$ we denote by $\Hyp{H}=\Hyp{H}(X)$ the set of all hyperplanes in $\CC{X}$, and by $\Hs{H}=\Hs{H}(X)$ the set of all halfspaces. For each halfspace $\hs{h}\in \Hs{H}$ we denote by $\comp{\hs{h}}\in\Hs{H}$ its complementary halfspace, and by $\hyp{h}\in\Hyp{H}$ its bounding hyperplane, which we also identify with the pair $\{\hs{h},\comp{\hs{h}}\}$.

A hyperplane in a \CCC \emph{separates} two points if each one belongs to a different halfspace. Conversely two hyperplanes are \emph{separated} by a point if there is no inclusion relation between the two halfspaces containing the point.
If two hyperplanes $\hyp h$ and $\hyp k$ intersect, we write $\hyp h \pitchfork \hyp k$.

The \emph{interval} between two vertices $\CCv{x}$ and $\CCv{y}$ of a \CCC is the maximal subcomplex $\inter{\CCv{x}}{\CCv{y}}$ contained in every halfspace containing $\CCv{x}$ and $\CCv{y}$.
Equivalently it can be seen as the union of all $L^1$-geodesics between $\CCv{x}$ and $\CCv{y}$.

Every interval of a $d$-dimensional \CCC admits an $L^1$-embedding into $R^d$ (see \cite{BCG09}).
A hyperplane intersects the interval $\inter{\CCv{x}}{\CCv{y}}$ if and only if it separates $\CCv{x}$ and $\CCv{y}$, and a cube belongs to the interval if every of its hyperplane separates them.
\subsection{Pocsets to \CCC}\label{pocsetstoccc}

We adopt Roller's viewpoint of Sageev's construction. Recall from \cite{Rol98} that a \emph{pocset} is a triple $(\Hs{P},\le,\comp{})$ of a poset $(\Hs{P},\le)$ and an order reversing involution $\comp{}:\Hs{P}\to\Hs{P}$ satisfying $\hs{h}\neq\comp{\hs{h}}$ and $\hs{h}$ and $\comp{\hs{h}}$ are incomparable for all $\hs{h}\in\Hs{P}$.

The set of halfspaces $\Hs{H}$ of a \CCC has a natural pocset structure given by inclusion relation, and the complement operation $\comp{}$. Roller's construction starts with a locally finite pocset $(\Hs{P},\le,\comp{})$ of finite width  (see \cite{Saa12} for definitions) and constructs a \CCC $\CC{X}(\Hs{P})$ such that $(\Hs{H}(X),\subseteq, \comp{})=(\Hs{P},\le,\comp{})$. We briefly recall the construction, for more details see \cite{Rol98} or \cite{Saa12}.

An \emph{ultrafilter} $U$ on $\Hs{P}$ is a subset verifying $\# \left(U\cap \left\{\hs{k},\comp{\hs{k}}\right\}\right) = 1$ for all $\hs{k}\in \Hs{P}$ and such that for all $\hs{h} \in U$, if $\hs{h} \leq \hs{k}$ then $\hs{k}\in U$. If we denote $\Hyp{P}=\left\{ \{ \hs{h},\comp{\hs{h}} \} \middle| \hs{h}\in\Hs{P} \right\}$, then $U$ can be viewed as a choice function $U:\Hyp{P}\to\Hs{P}$. Throughout the paper we will use both viewpoints.

An ultrafilter $U$ satisfies the \emph{Descending Chain Condition} (DCC) if any descending chain $\hs{k}_1 \supset \hs{k}_2 \supset \dots\supset \hs{k}_n \supset \dots$  of element of $U$ has finite length. The vertices of $\CC{X}(\Hs{P})$ are the DCC ultrafilters of $\Hs{P}$.

Two halfspaces are \emph{compatible} if their intersection is not empty in the cube complex. A subset of $\Hs{H}$ is an ultrafilter if and only if its halfspaces are pairwise compatible and it is maximal for this property.

\subsection{Patterns}\label{patternsdef}
In this section we introduce patterns. We adopt a somewhat similar definition for tracks as in  \cite{Dun85}, but we allow tracks to intersect, under some restrictions, to form $d$-patterns.

\begin{definition}
A  \emph{drawing} on a $2$-dimensional simplicial complex $\simp{K}$ is a non empty union of simple paths in the faces of $\simp{K}$ such that:
\begin{enumerate}
\item on each face there is a finite number of paths,
\item the two endpoints of each path are in the interior of distinct edges,
\item the interior of a path is in the interior of a face,
\item no two paths in a face have a common endpoint,
\item if a point $x$ on an edge $\simpe{e}$ is an endpoint then in every face containing $\simpe{e}$ there exists a path having $x$ as an endpoint.
\end{enumerate}

A \emph{pre-track} is a minimal drawing. A pre-track is \emph{self-intersecting} if it contains two intersecting paths.

Denote by $\usimp{K}$ the universal universal cover.

\begin{itemize}
\item A pre-track is a \emph{track} if none of its pre-track lifts in $\usimp{K}$ is self-intersecting.
\item A \emph{pattern} is a set of tracks whose union is a drawing.
\item A \emph{$d$-pattern} is a pattern such that the size of any at collection lifts of its tracks in $\usimp{K}$ that pairwise intersect is at most $d$.
\end{itemize}
\end{definition}

We will sometimes view a pattern as the unions of its tracks in $\simp{K}$.

\subsection{The coarse and fine pocset structures associated to a pattern} 
Let $\uptrn{P}$ be a pattern on a simply connected 2-simplex $\usimp{K}$.
For each track $\utrk{t}$ of $\uptrn{P}$, the set $\usimp{K}\setminus \utrk{t}$ has two connected components $\fhs{h}_{\utrk{t}}$ and $\comp{\fhs{h}_{\utrk{t}}}$ (see \cite{Dun85}). 
We call these components the \emph{fine halfspaces defined by $\utrk{t}$}, and the collection of all fine halfspaces is denoted by $\fHs{H}=\fHs{H}(\ptrn{P})$. 
This collection forms a locally finite pocset with respect to inclusion and complement operation $\comp{}$. 
If moreover $\uptrn{P}$ is a $d$-pattern, then $\fHs{H}$ has finite width. We denote by $\fCC{X}=X(\fHs{H})$ the \CCC constructed from the pocset $\fHs{H}$. 
Note that the dimension of $\fCC{X}$ is at most $d$. By this definition we clearly have a bijective map sending $\utrk{t}\in\ptrn{P}$ to the hyperplane $\{\fhs{h}_{\utrk{t}},\comp{\fhs{h}_{\utrk{t}}}\} \in \fHyp{H}=\Hyp{H}(\fCC{X})$.

We can also define the two \emph{coarse halfspaces defined by $\utrk{t}$} as the intersection $\chs{h}_{\utrk{t}} = \usimp{K}^0\cap \fhs{h}_{\utrk{t}}$ and $\comp{\chs{h}_{\utrk{t}}} = \usimp{K}^0\cap \comp{\fhs{h}_{\utrk{t}}}$ which are complementary in $\usimp{K}^0$. The collection of all coarse halfspaces is denoted by $\cHs{H}=X(\cHs{H})$. As above this set carries a locally finite pocset structure given by inclusion and complementation. As before, if moreover $\uptrn{P}$ is a $d$-pattern, then $\cHs{H}$ has finite width, and we denote by $\cCC{X}=X(\cHs{H})$ the \CCC constructed from the pocset $\cHs{H}$. Note that the dimension of $\fCC{X}$ is at most $d$. 

We call a connected component $A$ of $\simp{K}\setminus\ptrn{P}$ a \emph{region} of the pattern. We define the \emph{principal} ultrafilter corresponding to the region $A$ to be the set $U_A = \left\{\fhs{k}\in\fHs{H} | A \subseteq \fhs{k} \right\}$. Note that every principal ultrafilter verifies the DDC condition. Thus defines a map from the set of regions to $\fCC{X}^0$, and in particular a map from $\simp{K}^0$ to $\fCC{X}^0$. The same way we define a map from $\simp{K}^0$ to $\cCC{X}^0$.

Let $\Hsmap{\phi}:\fHs{H}\to\cHs{H}$ be the natural map sending $\fhs{h}_{\utrk{t}}$ to $\chs{h}_{\utrk{t}}=\usimp{K}^0\cap \fhs{h}_{\utrk{t}}$. The map $\Hsmap{\phi}$ respects the pocset structure and thus defines a map $\Hypmap{\phi}:\fHyp{H}\to\cHyp{H}=\Hyp{H}(\cCC{X})$.

\begin{definition}[parallelism]
Two tracks of a pattern are \emph{parallel} if they define the same coarse halfspaces. In other words if they have the same image under the map $\Hypmap{\phi}$.
\end{definition}

We have a natural map from the vertices (seen as ultrafilters) of $\cCC{X}$ to the ones of $\fCC{X}$.
Indeed the pullback of an ultrafilter by the map $\Hsmap{\phi}$ is also an ultrafilter.  Thus, we can define the map 
$\Phi^{(0)} : X^{(0)}(\cHs{H}) \rightarrow  X^{(0)}(\fHs{H})$ by $ \Phi^{(0)}(\CCv{x})= \Hsmap{\phi}^{-1}(\CCv{x})$.

\begin{proposition}
The map $\Phi^{(0)}$ can be extended to  a canonical map $\Phi : \cCC{X} \to \fCC{X}$.

Moreover, if a group $G$ acts on $\simp{K}$ leaving the pattern $\ptrn{P}$ invariant, then $G$ acts naturally on $\cCC{X}$ and $\fCC{X}$ and the map $\Phi$ is $G$-equivarient.
\end{proposition}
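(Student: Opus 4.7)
The plan is to extend $\Phi^{(0)}$ cube-by-cube, exploiting that $\Hsmap{\phi}:\fHs{H}\to\cHs{H}$ is a surjective pocset morphism with well-behaved fibres. First I would verify the well-definedness of $\Phi^{(0)}$ itself: if $\CCv{x}$ is a DCC ultrafilter on $\cHs{H}$, then $\Hsmap{\phi}^{-1}(\CCv{x})$ contains exactly one of $\fhs{h}$ or $\comp{\fhs{h}}$ for each fine hyperplane (because $\Hsmap{\phi}$ respects $\comp{}$) and is upward-closed (because $\Hsmap{\phi}$ is order-preserving); the DCC property transfers since any infinite strictly descending chain in $\Hsmap{\phi}^{-1}(\CCv{x})$ projects to a chain in $\CCv{x}$ that must eventually strictly descend, by local finiteness of the pocsets.

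The key combinatorial input is to analyse the fibres of $\Hypmap{\phi}$ and how $\Hsmap{\phi}$ interacts with transversality. Two parallel tracks, by definition, induce the same partition of $\usimp{K}^0$, and this forces their fine hyperplanes to be non-transverse: a crossing pair of tracks would produce four nonempty complementary quadrants, two of which must contain vertices lying in different coarse halfspaces, contradicting parallelism. Hence each fibre of $\Hypmap{\phi}$ is a chain of pairwise nested fine hyperplanes, carrying a canonical linear order by inclusion. Conversely, two transverse coarse hyperplanes lift to transverse fine hyperplanes, since each of the four fine intersections already contains the nonempty corresponding coarse intersection.

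Now for a $k$-cube $\CCc{c}$ of $\cCC{X}$ specified by a vertex $\CCv{U}$ and $k$ pairwise-transverse coarse halfspaces $\chs{h}_1,\ldots,\chs{h}_k$ with $\comp{\chs{h}_i}\in \CCv{U}$, let $S_i=\Hsmap{\phi}^{-1}(\{\chs{h}_i,\comp{\chs{h}_i}\})$. By the previous step each $S_i$ is a chain (finite, using the $d$-pattern hypothesis to bound pairwise-intersecting collections) and hyperplanes from distinct $S_i$ are transverse in $\fCC{X}$. Consequently the $L^1$-interval in $\fCC{X}$ between $\Phi^{(0)}(\CCv{U})$ and the image of the antipodal vertex of $\CCc{c}$ is isometric to a product of chains, whose extreme corners are precisely the images of the $2^k$ vertices of $\CCc{c}$. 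I would then define $\Phi|_{\CCc{c}}$ as the canonical map sending each edge-direction of $\CCc{c}$ affinely through the ordered chain $S_i$; consistency on faces follows since this parametrization depends only on the ordered data $(\CCv{U};S_1,\ldots,S_k)$ and restricts correctly to sub-cubes.

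For $\gp{G}$-equivariance, an action $\gp{G}\actson \simp{K}$ preserving $\ptrn{P}$ permutes the tracks and hence acts on both $\fHs{H}$ and $\cHs{H}$ by pocset automorphisms, with respect to which $\Hsmap{\phi}$ is equivariant. The induced actions on $\fCC{X}$ and $\cCC{X}$ then make $\Phi^{(0)}$ equivariant, and since the extension is intrinsic to $\Hsmap{\phi}$, the full map $\Phi$ inherits $\gp{G}$-equivariance. The main obstacle I anticipate is the nesting step: carefully ruling out geometric crossings between parallel tracks in $\usimp{K}$ and establishing the canonical order on each fibre in a way compatible with the product structure across adjacent cubes.
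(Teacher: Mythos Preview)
Your overall architecture---identify $\Hsmap{\phi}^{-1}$ of a cube's hyperplane set, decompose the corresponding interval in $\fCC{X}$ as a product over the fibres, then extend $\Phi^{(0)}$ factor-by-factor---is exactly the paper's strategy, and your verification that fine hyperplanes lying over distinct transverse coarse hyperplanes are again transverse is correct.

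The gap is your nesting claim. Parallel tracks \emph{can} cross, so a fibre $\Hypmap{\phi}^{-1}(\chyp{h})$ need not be a chain. Your argument that ``a crossing pair of tracks would produce four nonempty complementary quadrants, two of which must contain vertices lying in different coarse halfspaces'' does not go through: when two parallel tracks cross, all vertices of $\usimp{K}^0$ lie in the two \emph{diagonal} fine quadrants, while the two off-diagonal quadrants are nonempty open regions of $\usimp{K}$ containing no vertices at all. For a concrete picture, take a triangulated strip with top vertices $v_i$ and bottom vertices $w_i$, and let $t_1, t_2$ both run horizontally separating $\{v_i\}$ from $\{w_i\}$; arrange them to swap order on one vertical edge. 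They cross once, the off-diagonal quadrants are thin vertex-free slivers, and the tracks are parallel. Hence $\fhyp{h}_{t_1}$ and $\fhyp{h}_{t_2}$ are transverse in $\fHs{H}$ and the fibre is not linearly ordered. Your definition of $\Phi|_{\CCc{c}}$ as ``sending each edge-direction affinely through the ordered chain $S_i$'' then has no meaning.

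The paper sidesteps this by never asserting that fibres are chains. Each factor $X(\Hypmap{\phi}^{-1}(\chyp{h}_i))$ is a genuine (possibly higher-dimensional) CAT(0) cube complex, and the map $\psi_{\chyp{h}_i}$ sends the edge of $\cCC{X}$ to the \emph{CAT(0) geodesic} between the two image vertices in that factor. The product of these geodesics gives the extension on each cube, and canonicity (hence $G$-equivariance) follows because the CAT(0) geodesic is unique. If you replace ``affine map through the chain'' by ``CAT(0) geodesic in the fibre cube complex'', your argument becomes the paper's.
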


\begin{proof}

By construction, if two vertices $\CCv{x}$ and $\CCv{x}'$of $\cCC{X}$ are separated by the set of hyperplanes $\Hs{S}$ then the set of hyperplane separating $\Phi^{(0)}(\CCv{x})$ and $\Phi^{(0)}(\CCv{x}')$ is $\Hypmap{\phi}^{-1}(\Hs{S})$. 

If two hyperplanes $\chyp{h}_{\trk{t}}$ and $\chyp{h}_{\trk{s}}$ cross, then  $\fhyp{h}_{\trk{t}}$ and $\fhyp{h}_{\trk{s}}$ cross.

Thus given two opposite vertices in a cube $\cCCv{x}$ and $\cCCv{x'}$ separated by $n$ pairwise intersecting hyperplanes 
$\left\{\chyp{h}_1,\dots \chyp{h}_n\right\}$ in $\cCC{X}$, the interval $\inter{\Phi^{(0)}(\cCCv{x})}{\Phi^{(0)}(\cCCv{x'})}$ is isometric to
 the product cube complex $$X\left(\Hypmap{\phi}^{-1}\left(\left\{\chyp{h}_1,\dots \chyp{h}_n\right\}\right)\right) = X\left(\Hypmap{\phi}^{-1}\left(\chyp{h}_1\right)\right)\times \dots\times 
 X\left(\Hypmap{\phi}^{-1}\left(\chyp{h}_n\right)\right).$$

Given a hyperplane $\chyp h$ we define the map $$\psi_{\chyp h} : X\left(\left\{\chyp h\right\}\right) \to X\left(\Hsmap{\phi}^{-1}\left(\left\{\chyp h\right\}\right)\right)$$ sending the vertices (seen as ultrafilters) $\left\{ \chs h \right\}$  and $\left\{ \comp{\chs h}\right\}$ to $\Hsmap{\phi}^{-1}(\chs h)$ and $\Hsmap{\phi}^{-1}(\comp{\chs h})$ and the edge $X\left(\left\{\chyp h\right\}\right)$ to the $CAT(0)$ geodesic in between the endpoints of its image.

Given $n$ intersecting hyperplanes $\Hyp{K} = \left\{\chyp{h}_1,\dots \chyp{h}_n\right\}$, we define the map  $\psi_{\Hyp{K}} = \psi_{\chyp{h}_1}\times\dots \times \psi_{\chyp{h}_n}$ from the cube  $X\left(\Hyp{K}\right)$ to the product 
$$X\left(\Hsmap{\phi}^{-1}\left(\Hyp{K}\right)\right) = X\left(\Hypmap{\phi}^{-1}\left(\left\{\chyp{h}_1\right\}\right)\right)\times \dots\times X\left(\Hypmap{\phi}^{-1}\left(\left\{\chyp{h}_n\right\}\right)\right).$$
 
 We now define $\Phi$ from the cube $\inter{\cCCv{x}}{\cCCv{x'}}$ to $\inter{\Phi^{(0)}(\cCCv{x})}{\Phi^{(0)}(\cCCv{x'})}$ as the following composition:
 
 $$ \inter{\cCCv{x}}{\cCCv{x'}}\xrightarrow{\sim} X(\Hyp{H}(\inter{\cCCv{x}}{\cCCv{x'}})) \xrightarrow{\psi_{\Hyp{K}}} X(\Hsmap{\phi}^{-1}(\Hyp{H}(\inter{\cCCv{x}}{\cCCv{x'}})))\xrightarrow{\sim} \inter{\Phi^{(0)}(\cCCv{x})}{\Phi^{(0)}(\cCCv{x'})}. $$

It is straight forwoard to verify that this extends the map $\Phi^{(0)}$.

The $G$-equivariance follows from the canonicity of the map.

\end{proof}

\section{Resolutions}
\label{resolutions}

Let $\gp{G}$ be a finitely presented group. Let $\simp{K}$ be a fixed finite
triangle complex such that $\gp G \simeq \pi_{1}(\simp K,\simpv{v}_{0})$ for some $\simpv{v}_{0}\in \simp K$.
Let $\{ \usimpv{v}_{0},\ldots,\usimpv{v}_{l}\} $, $\{ \usimpe{e}_{0},\ldots,\usimpe{e}_{m}\} $
and $\{ \usimpf{f}_{1},\ldots,\usimpf{f}_{n}\} $ be sets
of representatives for the $\gp G$-orbits of 0-, 1- and 2-cells in $\usimp{K}$.

Now, let $\gp G$ act on a $d$-dimensional CAT(0) cube complex $\oCC{X}$.
For each $\usimpv{v}_{i}$ choose a vertex $\oCCv{x}_{i}$ in $\oCC X$. Since
$\gp G$ acts freely on $\usimp{K}$, one can extend the map $\usimpv{v}_{i}\mapsto \oCCv{x}_{i}$
to a $\gp G$-equivariant map $ f : \usimp{K}^{(0)}\to \oCC{X}^{(0)}$ by sending
$\gpelt{g}\usimpv{v}_{i}$ to $\gpelt{g}\oCCv{x}_{i}$. We extend this map to a map on the 1-skeleton
of $\usimp{K}$ by sending each edge representative $\usimpe{e}_{i}$
linearly to a combinatorial geodesic connecting the images of its endpoints $f(i(\usimpe{e}_{i}))$
and $f(t(\usimpe{e}_{i}))$ in $\oCC X$, and extend $\gp G$-equivariantly
to $\usimp{K}^{(1)}$. Similarly, extend the map to a $\gp G$-equivariant
map $f:\simp{K}\to \oCC{X}$ by sending the 2-cells $\usimpf{f}_{i}$ to disks whose
boundary is $f(\partial\usimpf{f}_{i})$, such exist because $\oCC X$ is
simply connected. We may further assume that the image of $f$ is
in $\oCC{X}^{(2)}$, the 2-skeleton of $\oCC{X}$, it is transverse to the hyperplanes
and has minimal number of squares of $\oCC{X}$. Such a map is called a \emph{minimal disk}. For more details see \cite{Sag95}.

\begin{lemma}
Let $\uptrn{P}=\bigcup_{\ohyp{h}\in\oHyp{H}}f^{-1}(\ohyp h)$
be the pullback of the hyperplanes of $\oCC X$ to $\usimp{K}$. The set $\uptrn{P}$ induces a $d$-pattern $\ptrn P$ on $\simp K$.
\end{lemma}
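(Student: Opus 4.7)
The plan is to verify that $\ptrn P$, the projection of $\uptrn P$ to $\simp K$ via the $\gp G$-covering map, satisfies each requirement of being a $d$-pattern: it is a drawing, each pre-track of $\ptrn P$ is a track, and any collection of pairwise intersecting lifts to $\usimp K$ has size at most $d$. Equivariance is automatic from the $\gp G$-equivariance of $f$ together with the $\gp G$-action on $\oHyp H$.

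I would first verify the drawing conditions by fixing a 2-cell $\usimpf f$ of $\usimp K$ and examining $f|_{\usimpf f}$. Because $f$ is transverse to every hyperplane with image in $\oCC X^{(2)}$, the preimage of each $\ohyp h$ in $\usimpf f$ is a properly embedded 1-submanifold, i.e., a disjoint union of arcs whose endpoints lie in the interiors of edges (never at vertices, since $f$ sends vertices to vertices). Compactness of $f(\usimpf f)$ makes the number of hyperplanes meeting $\usimpf f$ finite, yielding condition (1); transversality gives conditions (2) and (3); condition (4) follows since two arcs in the same face could share an endpoint $x$ on an edge only if $f(x)$ either lies on two distinct hyperplanes (impossible, as each point in the interior of an edge of $\oCC X$ lies on a unique hyperplane) or produces a non-manifold singularity in a transverse preimage (ruled out); condition (5) follows because an endpoint $x$ on an edge $\usimpe e$ satisfies $f(x) \in \ohyp h$ for some $\ohyp h$, and transversality forces $f$ to cross $\ohyp h$ in every face incident to $\usimpe e$, producing the required arc with endpoint $x$.

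Next I would observe that each pre-track of $\uptrn P$ is contained in $f^{-1}(\ohyp h)$ for a single hyperplane $\ohyp h$, since distinct hyperplanes give disjoint endpoints of arcs along edges of $\usimp K$; so the minimal drawings of $\uptrn P$ are grouped hyperplane-by-hyperplane, and within a single face two arcs of $f^{-1}(\ohyp h)$ are disjoint, whence no pre-track lift is self-intersecting and each pre-track of $\ptrn P$ is a track. Finally, for the $d$-pattern condition, if lifts $\utrk t_1,\ldots,\utrk t_n$ in $\usimp K$ pairwise intersect with $\utrk t_i \subseteq f^{-1}(\ohyp h_i)$, then any $x_{ij} \in \utrk t_i \cap \utrk t_j$ satisfies $f(x_{ij}) \in \ohyp h_i \cap \ohyp h_j$, so the $\ohyp h_i$ are pairwise crossing distinct hyperplanes in $\oCC X$; by the standard fact that $n$ pairwise crossing hyperplanes in a CAT(0) cube complex span a common $n$-cube, $\dim \oCC X = d$ forces $n \leq d$. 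The main technical subtlety is verifying drawing condition (4) together with the hyperplane-by-hyperplane grouping of pre-tracks; both rely on the observation that each interior point of an edge of $\oCC X$ lies on a unique hyperplane, combined with the transversality of $f$.
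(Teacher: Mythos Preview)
Your overall strategy---verify the drawing axioms, check that pre-tracks are tracks, then bound pairwise intersecting lifts by the dimension---is correct and is essentially a detailed unpacking of what the paper does in one line by citing \cite{DiDu89} for the fact that each $f^{-1}(\ohyp h)$ is a $1$-pattern. The $d$-bound argument at the end matches the paper's exactly.

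There are, however, two genuine gaps in your verification of the drawing axioms. First, transversality alone does not guarantee that $f^{-1}(\ohyp h)\cap\usimpf f$ consists only of arcs: a properly embedded $1$-submanifold of a disk can contain closed circles in the interior. Ruling these out requires the minimal-disk hypothesis on $f$ stated just before the lemma; a circle component would bound a subdisk mapping entirely into one halfspace of $\ohyp h$, allowing a reduction in the number of squares, contradicting minimality. You never invoke minimality.

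Second, your claim that ``transversality gives conditions (2) and (3)'' is not quite right for (2): transversality puts arc endpoints in the interiors of edges, but says nothing about the two endpoints lying on \emph{distinct} edges. What rules this out is the specific construction of $f$: each edge $\usimpe e$ of $\usimp K$ is sent to a combinatorial geodesic in $\oCC X$, and a combinatorial geodesic crosses any hyperplane at most once, so $f^{-1}(\ohyp h)\cap\usimpe e$ is at most a single point. (Alternatively, minimality again: an arc returning to the same edge bounds a bigon that can be pushed off.)

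With these two points supplied your argument is complete; the paper simply absorbs both into the reference to \cite{DiDu89}.
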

\begin{proof}
Note that the pullback of each $\ohyp{h}\in\oHyp{H}$ defines a 1-pattern on $\usimp{K}$ (see \cite{DiDu89}). 

We are left to show that the size of a collection of pairwise crossing tracks is at most d. Let $\utrk{t}_{1},\ldots,\utrk{t}_{k}$ be distinct pairwise intersecting tracks in $\uptrn{P}$.
Each $\utrk{t}_{i}$ maps into the corresponding hyperplane $\ohyp{h}_{i}$.
By the transversality to the hyperplanes, the hyperplanes $\ohyp{h}_{1},\ldots,\ohyp{h}_{k}$
are distinct intersecting hyperplanes. Thus $k\le d$.
\end{proof}

The $d$-pattern defines the fine cube complex $\fCC{X}$ (see Section \ref{preliminaries}) on which $\gp G$ acts. Note that the map $f$ induces
a map, which we denote by $\Hsmap{f}$, from $\fHs{H}$, the set
of halfspaces of $\fCC{X}$, to $\oHs{H}$. This map respects the
complementation operation, thus defines a map $\Hypmap{f}:\fHyp{H}\to\oHyp{H}$
on hyperplanes. Note also that the image of $\Hypmap{f}$ consists
of all the hyperplanes that divide non trivially the image of $f(\simp K)$.

\begin{proposition}
There exists a $\gp G$-equivariant combinatorial map $F:\fCC{X}\to \oCC{X}$, which induces the map $\Hsmap{f}$ on halfspaces.
\end{proposition}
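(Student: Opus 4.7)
The plan is to define $F$ first on vertices of $\fCC{X}$ and then extend combinatorially. The construction is more delicate than the pullback $\Phi:\cCC{X}\to\fCC{X}$ of the previous proposition, because $\Hsmap{f}:\fHs{H}\to\oHs{H}$ respects complementation but need not preserve the order on halfspaces, so one cannot naively pull back ultrafilters.

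For each hyperplane $\ohyp{k}\in\oHyp{H}$ in the image of $\Hypmap{f}$, the preimage $f^{-1}(\ohyp{k})$ is a 1-pattern on $\usimp{K}$, and the sub-pocset
\[
\fHs{H}_{\ohyp{k}} := \Hsmap{f}^{-1}\!\left(\{\ohs{k},\comp{\ohs{k}}\}\right) \subseteq \fHs{H}
\]
gives rise to a tree $T_{\ohyp{k}}$ whose vertices are the connected components of $\usimp{K}\setminus f^{-1}(\ohyp{k})$. Given a DCC ultrafilter $U$ on $\fHs{H}$, the restriction $U\vert_{\fHs{H}_{\ohyp{k}}}$ is a DCC ultrafilter on $\fHs{H}_{\ohyp{k}}$ and corresponds to a region $R_U^{\ohyp{k}}$; since $f$ does not cross $\ohyp{k}$ on $R_U^{\ohyp{k}}$, the image $f(R_U^{\ohyp{k}})$ lies in a unique halfspace of $\ohyp{k}$, which I declare to be $F(U)(\ohyp{k})$. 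For $\ohyp{k}$ outside the image of $\Hypmap{f}$, the whole of $f(\usimp{K})$ lies in a unique halfspace of $\ohyp{k}$, and I use that.

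The crux is to show that $F(U)$ is a DCC ultrafilter on $\oHs{H}$, with pairwise compatibility as the main obstacle. For principal ultrafilters $U_A$ coming from a region $A$ of $\usimp{K}\setminus\ptrn{P}$, one verifies directly that $A \subseteq R_{U_A}^{\ohyp{k}}$ for every $\ohyp{k}$, whence $f(A)\subseteq F(U_A)(\ohyp{k})$ and any two such halfspaces already meet in the non-empty set $f(A)$. For a general DCC ultrafilter $U$, if one had $F(U)(\ohyp{k}_1)\subseteq\comp{F(U)(\ohyp{k}_2)}$, then $R_U^{\ohyp{k}_1}$ and $R_U^{\ohyp{k}_2}$ would be mapped to disjoint halfspaces; tracing the tracks of $\uptrn{P}$ that separate these two regions in $\usimp{K}$ yields a contradiction with the coherence of the choices made by $U$. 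The DCC property of $F(U)$ then follows from that of $U$ together with local finiteness.

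For the extension to cubes: adjacent vertices of $\fCC{X}$ differ in the halfspace of exactly one track $\utrk{t}$; flipping $U(\utrk{t})$ only changes $R_U^{\ohyp{k}}$ for $\ohyp{k}=\Hypmap{f}(\fhyp{h}_{\utrk{t}})$ and toggles $F(U)(\ohyp{k})$ to its complement, so $F$ sends each edge of $\fCC{X}$ to an edge of $\oCC{X}$. Pairwise crossing hyperplanes of $\fCC{X}$ correspond to tracks pairwise crossing in $\usimp{K}$, whose images under $f$ are pairwise distinct and pairwise crossing hyperplanes of $\oCC{X}$, so $F$ sends each $k$-cube combinatorially onto a $k$-cube. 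The $G$-equivariance is automatic from the canonicity of the construction of $\Hsmap{f}$ and of the assignment $U\mapsto R_U^{\ohyp{k}}$, and $F$ induces $\Hsmap{f}$ on halfspaces by construction. The hardest part will be the compatibility verification for non-principal DCC ultrafilters, where the failure of $\Hsmap{f}$ to preserve the order forces a careful geometric argument over the simplicial complex $\usimp{K}$.
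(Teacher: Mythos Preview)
Your approach is essentially the paper's, repackaged in the language of regions rather than minimal hyperplanes. Where you take the region $R_U^{\ohyp{k}}$ of the sub-pattern $f^{-1}(\ohyp{k})$ and use that $f(R_U^{\ohyp{k}})$ lies in one halfspace of $\ohyp{k}$, the paper picks a hyperplane $\fhyp{h}\in\Hypmap{f}^{-1}(\ohyp{k})$ minimal with respect to $U$ and sets $F(U)(\ohyp{k})=\Hsmap{f}(U(\fhyp{h}))$; these agree because the minimal hyperplanes are precisely the tracks bounding $R_U^{\ohyp{k}}$, and your observation that the region maps into a single halfspace is exactly the paper's well-definedness check for the choice of minimal $\fhyp{h}$.

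For compatibility, the paper does not split into principal versus general DCC ultrafilters. It argues uniformly: for non-crossing $\ohyp{h},\ohyp{k}\in\oHyp{H}$, pick minimal $\fhyp{h},\fhyp{k}$, take any point $p$ in the (necessarily non-empty) intersection $U(\fhyp{h})\cap U(\fhyp{k})\subseteq\usimp{K}$, and observe that $f(p)$ lies in both $F(U)(\ohyp{h})$ and $F(U)(\ohyp{k})$. This is shorter than the contradiction you sketch via ``tracing the tracks that separate the two regions'', and it makes your case distinction unnecessary. In your framing the same idea becomes: since $\ohyp{h}$ and $\ohyp{k}$ do not cross, $f^{-1}(\ohyp{h})\cup f^{-1}(\ohyp{k})$ is again a $1$-pattern, and the restriction of $U$ to it is a DCC ultrafilter whose region is $R_U^{\ohyp{h}}\cap R_U^{\ohyp{k}}\ne\emptyset$; any point there does the job. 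The extension to cubes and the $G$-equivariance are as you describe and match the paper.
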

\begin{proof}
Let us first define the map $F$ on the vertices of $\fCC{X}$. Let $\fCCv{x}$
be a vertex of $\fCC{X}$, i.e a DCC ultrafilter on the halfspaces, which we regard as the map $\fCCv{x}:\fHyp{H}\to\fHs{H}$ which assigns to each hyperplane its halfspace that contains $\fCCv{x}$.
We define $F(\fCCv{x})$ as follows, for each $\ohyp{h}\in\oHyp{H}$
either $\ohyp{h}$ belongs to the image of $\Hypmap{f}$ or not. In
the former case, let $\fhyp{h}\in\fHyp{H}$ be a minimal hyperplane with respect to $\fCCv{x}$
among $\left(\Hypmap{f}\right)^{-1}(\ohyp{h})$, and define
$F(\fCCv{x})(\ohyp{h})=\Hsmap{f}(\fCCv{x}(\fhyp{h}))$. In the latter case we choose
$\oCCv{x}(\ohyp{h})$ to be the halfspace $\ohs{h}$ which contains $f(\simp K)$.

The map $F$ does not depend on the choice of $\fhyp{h}$: for every $\ohyp{h}\in\oHyp{H} $ and every  $\fhyp{h}_{1},\fhyp{h}_{2}\in\Hypmap{f}^{-1}(\ohyp{h})$, minimal with respect to $\fCCv{x}$ among $\Hypmap{f}^{-1}(\ohyp{h})$, we have $\Hsmap{f}(\fhyp{h}_{1})=\Hsmap{f}(\fhyp{h}_{2})$. Otherwise a geodesic path that connects them in $\usimp{K}$ is mapped by $f$
to a path that passes from $\ohs{h}$ to $\comp{\ohs h}$ in $\oCC X$ without crossing
$\ohyp{h}$.

The map $F(\fCCv{x}):\oHyp{H}\to\oHs{H}$ is an ultrafilter:
let $\ohyp{h},\ohyp{k}\in\oHyp{H}$ which do not cross, let
$\fhyp{h},\fhyp{k}$ be the hyperplanes in the definition of $F(\fCCv{x})$,
and let $\fhs{h},\fhs{k}$ be their orientation in $\fCCv{x}$. Clearly the orientation
of the halfspaces $\fhs{h},\fhs{k}\subset \usimp K$ is such that they have a non trivial intersection (otherwise, $\fCCv{x}$ is not an ultrafilter). If $p$ is a point in this intersection then both $F(\fCCv{x})(\ohyp{h}),F(\fCCv{x})(\ohyp{k})$ contain $f(p)$, showing that they form a compatible pair.

The map $F:\fCC{X}^{(0)}\to \oCC{X}^{(0)}$ extends to $F:\fCC{X}\to \oCC{X}$: if $\fCCv{x}_{1},\ldots,\fCCv{x}_{2^{k}}$ are the vertices of a $k$-dimensional
cube in $\fCC{X}$, i.e their ultrafilters differ on exactly $k$ hyperplanes $\fhyp{h}_{1},\ldots,\fhyp{h}_{k}$.
Then there images will differ exactly on the collection of distinct
pairwise transverse hyperplanes $\Hypmap{f}(\fhyp{h}_{1}),\ldots,\Hypmap{f}(\fhyp{h}_{k})$.\end{proof}

The pair $(\fCC{X},F)$ is called a \emph{fine resolution} of $\oCC{X}$. In Section \ref{preliminaries}, we also constructed the coarse \CCC $\cCC{X}$ and a $\gp G$-equivariant map $\Phi: \cCC{X}\to\fCC{X}$ between the two complexes. The pair $(\cCC{X},F\circ \Phi)$ is called a \emph{coarse resolution} of $\oCC{X}$. Note that both resolution depend on the choice of $\simp K$ and on the equivariant map $f:\simp{K}\to\oCC{X}$.

\begin{proposition}\label{properties of the fine resolution}
The fine resolution $(\fCC{X},F)$ has the following properties.
\begin{itemize}
\item The hyperplane stabilizers in $\fCC{X}$ are finitely generated. 
\item Cube and hyperplane stabilizers in $\fCC{X}$ are contained in those of $\oCC X$.
In particular, if the action $\gp G \actson \oCC{X}$ is proper or free
then so is $\gp G \actson \fCC{X}$.
\end{itemize}
\end{proposition}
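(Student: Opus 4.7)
The plan is to handle the two bullets separately. The second (containment of stabilizers) is immediate from the $\gp G$-equivariance of the maps $F$, $\Hsmap{f}$ and $\Hypmap{f}$; the first (finite generation) reduces to a covering-space argument on tracks.

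For the containment, recall that $\Hypmap{f}$ and $F$ are $\gp G$-equivariant by construction. Given $\fhyp{h}\in\fHyp{H}$ with image hyperplane $\ohyp{h}=\Hypmap{f}(\fhyp{h})$, any $g\in\Stab_G(\fhyp{h})$ satisfies $g\cdot\ohyp{h}=\Hypmap{f}(g\cdot\fhyp{h})=\ohyp{h}$, so $\Stab_G(\fhyp{h})\le\Stab_G(\ohyp{h})$. For a cube $\fCCc{C}$ fixed pointwise by $g$, the ultrafilter at each vertex of $\fCCc{C}$ is preserved by $g$; since $F$ is a $\gp G$-equivariant combinatorial map, $g$ also fixes every vertex of the image cube $F(\fCCc{C})$ and hence fixes $F(\fCCc{C})$ pointwise. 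Thus cube fixators and hyperplane stabilizers in $\fCC{X}$ are contained in those of $\oCC{X}$. Since for a combinatorial action on a CAT(0) cube complex both freeness and (up to the finite cube-symmetry extension) properness are detected by cube fixators, freeness or properness of $\gp G\actson\oCC{X}$ transfers to $\gp G\actson\fCC{X}$.

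For finite generation, the key observation is that by the Roller-style construction of $\fCC{X}$ from the pocset $\fHs{H}$, hyperplanes of $\fCC{X}$ are in natural bijection with tracks of the $d$-pattern $\uptrn{P}$ on $\usimp{K}$, and this bijection identifies $\Stab_G(\fhyp{h})$ with the setwise $G$-stabilizer of the corresponding track $\utrk{t}$. Since distinct $G$-translates of $\utrk{t}$ are either equal or disjoint (they are connected components of the preimage of a single track $\trk{t}\subset\simp{K}$), the restriction of $\usimp{K}\to\simp{K}$ is a regular covering $\utrk{t}\to\trk{t}$ whose deck group is $\Stab_G(\utrk{t})$, and this action is free because $\gp G\actson\usimp{K}$ is free. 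The base $\trk{t}$ is a finite graph, as $\simp{K}$ is finite and each face contains only finitely many arcs of $\uptrn{P}$. The classical short exact sequence of such a covering
\begin{equation*}
1 \to \pi_1(\utrk{t}) \to \pi_1(\trk{t}) \to \Stab_G(\utrk{t}) \to 1
\end{equation*}
then exhibits $\Stab_G(\utrk{t})$ as a quotient of the finitely generated free group $\pi_1(\trk{t})$, so $\Stab_G(\utrk{t})$ is finitely generated.

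The main subtlety to verify is that a track, as a minimal drawing, is a connected 1-complex in $\usimp{K}$ whose projection to $\simp{K}$ is a finite graph, so that the covering-space step applies. Once that is unpacked from the definition in Section \ref{patternsdef}, the remainder of the argument is routine.
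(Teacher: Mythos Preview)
Your proof is correct and follows essentially the same approach as the paper's: both identify the hyperplane stabilizer with the image of $\pi_1(\trk t)\to\pi_1(\simp K)=G$ (you phrase this via the short exact sequence of the regular covering $\utrk t\to\trk t$, which is just an unpacking of the same fact), and both derive the stabilizer containment directly from the $G$-equivariance of the combinatorial map $F$. One small remark: the proposition is stated for (setwise) cube \emph{stabilizers}, whereas you argue for pointwise fixators; the same equivariance argument gives $\Stab_G(\fCCc C)\le\Stab_G(F(\fCCc C))$ immediately, so you may as well state it that way.
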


\begin{proof}

The stabilizer of a track $\utrk t \in \uptrn P$ is the image of $\pi_1(\trk t)$ in $\pi_1 (\simp K) \simeq G$ under the inclusion map. The track $\trk{t}$ is a finite graph in $\simp K$, and thus finitely generated.

The map $F:\fCC{X} \to \oCC X$ is $\gp G$-equivariant and combinatorial, thus for all cube $\fCCc{C} \in \fCC{X}$ we have $\Stab_{\gp{G}} (\fCCc{C}) < \Stab_{\gp G} ( F(\fCCc C) )$, and similarly for hyperplanes.
\end{proof}

Since the map $\Phi:\cCC{X}\to\fCC{X}$ is not combinatorial, the properties of the coarse resolution are slightly weaker.

\begin{proposition}
The coarse resolution $(\cCC{X},F\circ \Phi)$ has the following properties.
\begin{enumerate}
\item The hyperplane stabilizers in $\cCC{X}$ are finitely generated. 
\item If the hyperplanes in $\oCC{X} / G$ are embedded, i.e, for all $g\in\gp{G}$ and $\ohyp{h}\in\oHyp{H}$ either $g\ohyp{h}=\ohyp{h}$ or $g\ohyp{h}\cap\ohyp{h}=\emptyset$, then the oriented hyperplane stabilizers in $\cCC{X}$, i.e. stabilizers of the hyperplane which do not exchange the halfspace, are contained in those of $\oCC{X}$.
\item Cube fixators in $\cCC{X}$ are contained in fixators of cubes of the same dimension in $\oCC{X}$.
\item Cube stabilizers in $\cCC{X}$ act elliptically on $\oCC{X}$.
\item If the action $\gp G \actson \oCC{X}$ is proper or free then so is $\gp G \actson \cCC{X}$.
\end{enumerate}
\end{proposition}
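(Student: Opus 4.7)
The plan is to establish the five properties in roughly the following order of difficulty: (4) first, then (5) as an immediate consequence, then (3), (1), and finally (2). The main tools are $\gp{G}$-equivariance of $F \circ \Phi$, barycenter arguments, cocompactness of $\gp{G} \actson \usimp{K}$, Proposition \ref{properties of the fine resolution} (for finite generation of track stabilizers), and, in (2), a geometric case analysis using the embedding hypothesis. For (4): any $\gpelt{g} \in \Stab_{\gp{G}}(\cCCc{C})$ is a cellular automorphism fixing the compact cube $\cCCc{C}$ setwise, hence fixes its barycenter $b$; by $\gp{G}$-equivariance of $F\circ\Phi$, the point $(F \circ \Phi)(b) \in \oCC{X}$ is $\gpelt{g}$-fixed, giving an elliptic action. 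Item (5) is then immediate, since cube fixators in $\cCC{X}$ in particular fix a point of $\oCC{X}$, so triviality or finiteness of $\oCC{X}$-point stabilizers transfers to $\cCC{X}$.

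For (3), the fixator of a cube $\cCCc{C}$ of dimension $k$ fixes $\cCCc{C}$ pointwise, so by $\gp{G}$-equivariance it fixes $(F\circ\Phi)(\cCCc{C}) \subseteq \oCC{X}$ pointwise. Using the explicit product decomposition of $\Phi$ on $\cCCc{C}$ (from the proposition in Section \ref{preliminaries}) together with the combinatorial, dimension-preserving nature of $F$, I would identify a $k$-cube $\oCCc{C}$ of $\oCC{X}$ inside this fixed image; this cube is then fixed pointwise by the cube fixator, giving the required inclusion of fixators.

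For (1), a hyperplane $\chyp{h}$ of $\cCC{X}$ corresponds to a parallelism class $[\utrk{t}]$ of tracks in $\usimp{K}$. Cocompactness of $\gp{G} \actson \usimp{K}$ yields finitely many $\gp{G}$-orbits of tracks in $\uptrn{P}$, hence finitely many $\Stab_{\gp{G}}(\chyp{h})$-orbits in $[\utrk{t}]$. Each individual track stabilizer is the fundamental group of a finite subgraph of $\simp{K}$ (as in Proposition \ref{properties of the fine resolution}), hence finitely generated. A Bass--Serre-type assembly from generators of orbit-representative stabilizers together with finitely many elements permuting orbits then produces a finite generating set for $\Stab_{\gp{G}}(\chyp{h})$.

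For (2), under the embedding hypothesis, I would argue that all tracks in a single parallelism class $[\utrk{t}]$ have $f$-images lying in hyperplanes of $\oCC{X}$ belonging to a single $\gp{G}$-orbit. If two tracks $\utrk{t}_1, \utrk{t}_2 \in [\utrk{t}]$ had $f$-images in non-$\gp{G}$-equivalent hyperplanes $\ohyp{h}_1, \ohyp{h}_2$, these would be disjoint in $\oCC{X}$ by the embedding hypothesis; a case analysis on how $f(\simp{K})$-vertices distribute relative to this disjoint pair, against the requirement that $\utrk{t}_1$ and $\utrk{t}_2$ coarsely separate the same vertices of $\usimp{K}$, yields a contradiction. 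The oriented class stabilizer then preserves this $\gp{G}$-orbit and fixes a specific oriented hyperplane, giving the required inclusion. The hardest step is (2), where this case analysis against the embedding hypothesis demands the most delicate geometric handling; (3) is a close second, due to the non-combinatorial nature of $\Phi$ making the dimension bookkeeping subtle.
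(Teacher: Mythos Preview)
Your treatment of (3), (4), and (5) is essentially correct and close to the paper's (the paper deduces (4) and (5) from (3), while your barycenter argument gives (4) directly; both are fine). The real problems are in (1) and (2).

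For (1), the assembly step is invalid as stated: a group acting on a set with finitely many orbits and finitely generated point stabilizers need not itself be finitely generated (e.g.\ any non-finitely-generated group acting on itself by left translation has one orbit and trivial stabilizers). There is no Bass--Serre structure here to save you. What you are missing is that the parallelism class $\Hypmap{\phi}^{-1}(\chyp{h})$ is in fact a \emph{finite} set. To see this, pick an edge $e$ of $\usimp{K}$ whose endpoints lie in different coarse halfspaces of $\chyp{h}$; then every track in the class separates the endpoints of $e$ and hence meets $e$, while only finitely many tracks meet any given edge. Once the set is finite, $\Stab(\chyp{h})$ is generated by any one track stabilizer together with finitely many coset representatives, and you are done.

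For (2), your central implication is simply false: the embedding hypothesis says that $g\ohyp{h}$ and $\ohyp{h}$ are equal or disjoint for all $g\in\gp{G}$; it says nothing about two hyperplanes $\ohyp{h}_1,\ohyp{h}_2$ lying in \emph{different} $\gp{G}$-orbits, which may perfectly well intersect. So your case analysis never gets started, and in fact tracks in a single parallelism class can map under $f$ to hyperplanes of $\oCC{X}$ in distinct $\gp{G}$-orbits. The paper proceeds quite differently. First, the embedding hypothesis descends from $\oCC{X}/\gp{G}$ to $\fCC{X}/\gp{G}$: if some $g$ sent a track $\utrk{t}$ to a crossing track $g\utrk{t}$, then $f$ would force $\ohyp{h}=\Hypmap{f}(\fhyp{h}_{\utrk{t}})$ and $g\ohyp{h}$ to cross. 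Second, one studies the action of $\Stab(\chs{h})$ on the finite set $\Hsmap{\phi}^{-1}(\chs{h})$, in which two fine halfspaces are comparable precisely when their tracks do not cross. The embedding hypothesis in $\fCC{X}/\gp{G}$ then forbids any $g\in\Stab(\chs{h})$ from sending $\fhs{h}$ to an incomparable element; since $g$ is order-preserving on a finite poset, $g\fhs{h}\ne\fhs{h}$ would yield an infinite strictly monotone chain $\fhs{h}\subsetneq g\fhs{h}\subsetneq g^2\fhs{h}\subsetneq\cdots$, a contradiction. Hence $\Stab(\chs{h})$ fixes each $\fhs{h}\in\Hsmap{\phi}^{-1}(\chs{h})$, and the inclusion into an oriented hyperplane stabilizer of $\oCC{X}$ follows via $F$.
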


\begin{proof}
Note that the implications $3\implies 4\implies 5$ are trivial.

Recall from Section \ref{preliminaries} that the preimage $\Hypmap{\phi}^{-1} (\chyp{h}_{\trk{t}})$ is finite, and thus the stabilizer of $\chyp{h}_{\trk{t}}$ is generated by the stabilizers of the tracks $\Hypmap{\phi}^{-1} (\chyp{h}_{\trk{t}})$ and finitely many elements permuting this collection. This completes the proof of 1.

If the hyperplanes in  $\oCC{X} / G$ are embedded then so are the hyperplanes in $\cCC{X} / G$ and in $\fCC{X} / G$, otherwise there is an element $g\in\gp{G}$ and a track $\utrk{t}\in\uptrn{P}$ such that  $\utrk{t}$ and $g \utrk{t}$ cross, which by the map $f$ would imply that $\Hypmap{f}(\fhyp{h}_{\utrk{t}})$ and $g\Hypmap{f}(\fhyp{h}_{\utrk{t}})$ cross.

Also note that the collection of $\Hsmap{\phi}^{-1} (\chs{h})$ is a poset on which the stabilizer of $\chs{h}$ acts. Moreover, two hyperplanes in $\Hsmap{\phi}^{-1} (\chs{h})$ cross if and only if they are incomparable. Since no hyperplane can be sent to a crossing hyperplane, the stabilizer of $\chs{h}$ fixes this poset. Thus the stabilizer of $\chs{h}$ is included in the stabilizer of each of $\fhs{h}\in\Hsmap{\phi}^{-1} (\chs{h})$. This completes the proof of 2.

By the construction of the map $\Phi:\cCC{X}\to\fCC{X}$ we see that each cube $\cCCc{C}$ of $\cCC{X}$ is affinely embedded into a product region of higher dimension than that of $\cCCc{C}$, thus a generic point in $\cCCc{C}$ is sent to a generic point of a cube of higher dimension. This completes the proof of 3.
\end{proof}

Though one might expect that the resolution of a cocompact $\gp G$-action would be cocompact, this is not always the case as the following example shows.
Let $\gp G = \Z ^2 =\left< e_1=(1,0),e_2=(0,1) \right>$, with the presentation complex $\simp K$  obtained by gluing two triangles along an edge to form a square and then identifying opposite edges to form a torus. The group $\gp G$ acts on $\oCC X = \R ^2 \times [0,1]$ (with the standard cubulation by unit cubes) by $e_i(x,t)=(e_i+x,1-t)$ for $i=1,2$, i.e it acts by translations on the first factor and by inversions on the second.

The pattern obtained on $\usimp K=\R ^2$  consists of 3 infinite sets of tracks of different parallelism  classes of lines (see Figure \ref{non-example}). Therefore, the associated CAT(0) cube complexes $\cCC{X}$ (in this case $\fCC{X}=\cCC{X}$) is the standard cubulation of $\R^3$, on which $\gp{G}=\Z^2$ does not act cocompactly.

\begin{figure}[!ht]
\begin{center}
 \def\svgwidth{\textwidth}
 \input{./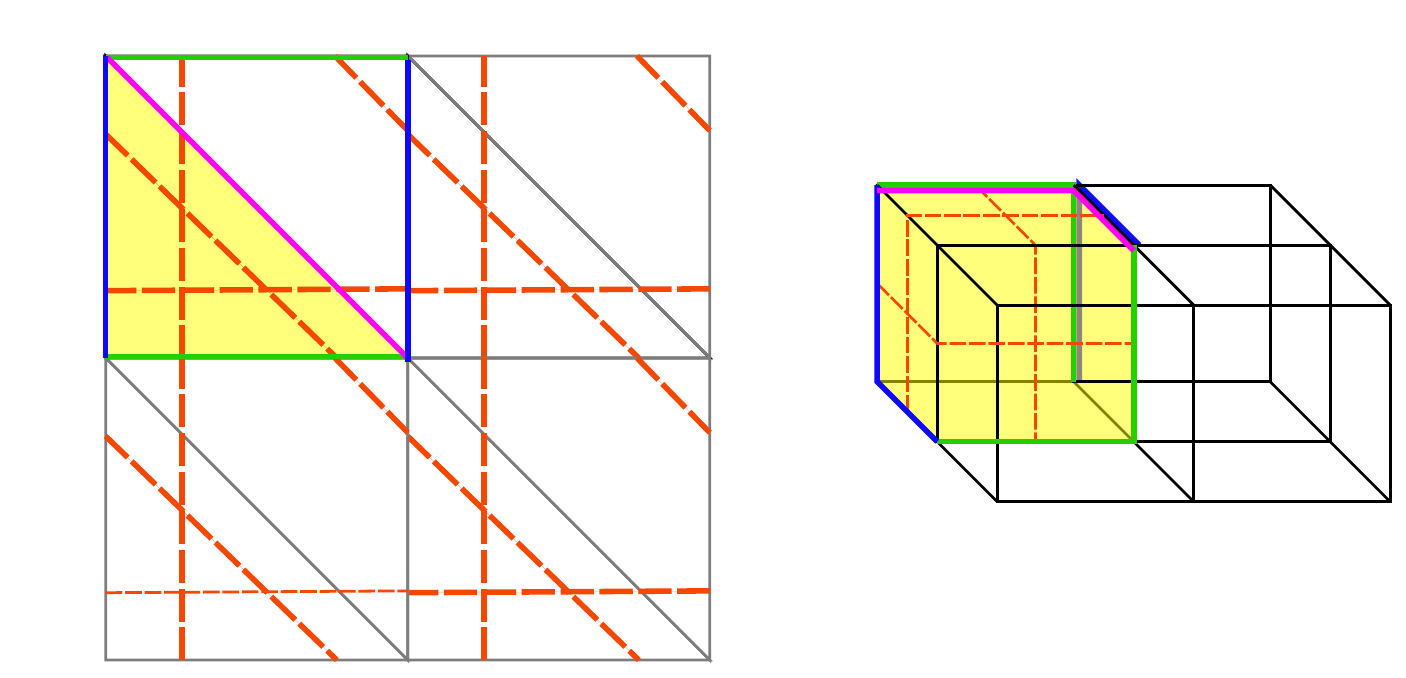_tex}
 \end{center}
 \caption{An example for a non-cocompact resolution of a cocompact action}
 \label{non-example}
\end{figure}

However, such an example cannot occur in dimension 2 (or smaller). In fact an even stronger statement holds in this case.

\begin{proposition}\label{cocompactness of 2d resolutions}
Let $\simp K$ be a compact triangle complex, and let $\gp G = \pi _1 (\simp K)$. If $\gp G \actson \oCC X$ , a 2-dimensional \CCC , then $\gp G \actson \fCC{X}$ cocompactly. 

\end{proposition}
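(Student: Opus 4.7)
My plan is to show that every vertex of $\fCC X$ is principal, i.e.\ of the form $U_A$ for some region $A$ of $\usimp K \setminus \uptrn P$, and then to deduce cocompactness. Since $\simp K$ is a finite triangle complex, the pattern $\ptrn P$ has finitely many tracks and regions on $\simp K$, so $G$ acts with finitely many orbits on regions of $\usimp K \setminus \uptrn P$. Combined with local finiteness of $\fHs H$ and the bound $\dim \fCC X \le 2$, a bijection between vertices of $\fCC X$ and regions would immediately give finitely many $G$-orbits of cubes in each dimension, hence cocompactness.

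To establish that every DCC ultrafilter $U$ is principal, let $M \subseteq U$ be the set of its minimal elements. By the DCC, each element of $U$ contains some element of $M$, and hence $K_0 := \bigcap_{\fhs h \in U} \fhs h = \bigcap_{\fhs h \in M} \fhs h$ as subsets of $\usimp K$. Finite width of $\fHs H$ (which holds because $\uptrn P$ is a 2-pattern) ensures that the antichain $M$ is finite. The core step is to show $K_0 \neq \emptyset$; for this I would appeal to a Helly-type property specific to 2-patterns on simply connected 2-complexes, stating that any finite family of pairwise compatible halfspaces has non-empty common intersection. Once $K_0$ is known to be open and non-empty, and since each region of $\usimp K \setminus \uptrn P$ lies entirely in or entirely out of every halfspace, $K_0$ must contain some region $A$.

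It remains to check that $U = U_A$ for this $A$. The inclusion $U \subseteq U_A$ is immediate, since $A \subseteq K_0 \subseteq \fhs h$ for every $\fhs h \in U$. Conversely, any $\fhs l$ with $A \subseteq \fhs l$ must lie in $U$: if instead $\comp{\fhs l} \in U$, then $A \subseteq K_0 \subseteq \comp{\fhs l}$ together with $A \subseteq \fhs l$ would force $A = \emptyset$, a contradiction.

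The main obstacle is the Helly-type property invoked in the second paragraph. Its failure in dimension three is exactly what drives the non-cocompact example preceding the proposition: three pairwise compatible halfspaces there have empty common intersection, producing non-principal DCC ultrafilters which give ``extra'' vertices of $\fCC X$. In the 2-pattern setting, the prohibition on three pairwise-crossing tracks, combined with simple connectivity of $\usimp K$, should exclude such configurations, but verifying this rigorously is the delicate point of the proof.
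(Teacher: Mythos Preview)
Your strategy coincides with the paper's: show that every DCC ultrafilter on $\fHs{H}$ is principal, and deduce cocompactness from the finiteness of $G$-orbits of regions. You have correctly isolated the only nontrivial step, non-emptiness of $K_0=\bigcap_{\fhs{h}\in M}\fhs{h}$, but you do not carry it out, and this is exactly where the $2$-dimensionality hypothesis is used. One small correction along the way: finite width bounds only pairwise \emph{transverse} families, whereas distinct minimal elements $\fhs{h},\fhs{k}\in M$ may satisfy $\comp{\fhs{h}}\subset\fhs{k}$ without being transverse (think of the four minimal halfspaces at a vertex of the standard cubulation of $\mathbb{R}^2$); so your deduction that $M$ is finite is not justified as written. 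Fortunately, finiteness of $M$ is not needed.

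The paper dispatches the Helly step directly, without invoking any general principle. Fix a minimal $\fhs{h}_{\utrk{t}}\in M$. If $\utrk{t}$ crosses no other track, then for every other minimal $\fhs{h}_{\utrk{v}}\in M$ one has $\utrk{t}\subset\fhs{h}_{\utrk{v}}$ (otherwise $\fhs{h}_{\utrk{v}}\subsetneq\fhs{h}_{\utrk{t}}$ or $\fhs{h}_{\utrk{v}}\subset\comp{\fhs{h}_{\utrk{t}}}$, contradicting minimality or compatibility), so by local finiteness of the pattern any point of $\fhs{h}_{\utrk{t}}$ close enough to $\utrk{t}$ lies in $K_0$. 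Otherwise there is a minimal $\fhs{h}_{\utrk{u}}\in M$ with $\utrk{t}$ and $\utrk{u}$ intersecting. Since $\uptrn{P}$ is a $2$-pattern, no third track crosses both $\utrk{t}$ and $\utrk{u}$; hence every other $\fhs{h}_{\utrk{v}}\in M$ contains $\utrk{t}$ or $\utrk{u}$, and any point of $\fhs{h}_{\utrk{t}}\cap\fhs{h}_{\utrk{u}}$ close enough to a point of $\utrk{t}\cap\utrk{u}$ lies in $K_0$. This two-case argument is the missing ingredient in your sketch.
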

\begin{proof}
By assumption $\gp G$ acts cocompactly on $\usimp K$, and the pattern on each triangle of $\simp K$ is finite. Thus, $\gp G$ acts cocompactly on the set of regions, i.e the set of connected components of $\simp{K}\setminus \ptrn{P}$. Hence, it is enough to show that every vertex $\fCCv{x}\in\fCC{X}$  is a principal ultrafilter, i.e corresponds to a region in $\usimp K$.

Let $\fHs{H}_{\fCCv{x}}$ be the collection of minimal halfspaces in $\fCCv{x}$. It is enough to show that the intersection $A_\fCCv{x}=\bigcap_{\fhs{h}\in \fHs{H}_{\fCCv{x}}} \fhs{h}\subseteq \usimp{K}$ is non-empty since the intersection is a region in $\usimp{K}$ corresponding to $\fCCv{x}$.
Fix $\fhs{h}_{\utrk{t}} \in \fHs{H}_{\fCCv{x}}$. There are two cases to consider. 

Case 1. The track $\utrk t$ does not intersect any other track. In this case, every other $\fhs{h}_{\utrk{v}} \in \fHs{H} _{\fCCv{x}}$  contains $\utrk{t}$, for otherwise $\fhs{h}_{\utrk{t}}$ is not minimal. Thus, any point in $\fhs{h}_{\utrk{t}}\subseteq \usimp{K}$ close enough to $\utrk{t}$ will be in the intersection above.

Case 2. There exists $\fhs{h}_{\utrk{u}} \in \fHs{H}_{\fCCv{x}}$ such that $\utrk{t}$ and $\utrk{u}$ intersect. In this case, for all $\fhs{h}_{\utrk{v}} \in \fHs{H}_{\fCCv{x}}$ the track $\utrk{v}$ cannot intersect both $\utrk{t}$ and $\utrk{u}$. Hence, the corresponding fine halfspace $\fhs{h}_{\utrk{v}}$ contains either $\utrk{t}$ or $\utrk{u}$. Thus, any point in $\fhs{h}_{\utrk{t}} \cap \fhs{h}_{\utrk{u}}\subseteq \usimp{K}$ close enough to $\utrk{t}\cap \utrk{u}$ will be in the intersection above.
\end{proof}

Recall from \cite{DiDu89} that a group is \emph{almost finitely presented} if it acts freely, cocompactly on a simplicial complex $\bar{\simp{K}}$ with $H^1(\bar{\simp{K}},\Z/2\Z)=0$. All of the above works when replacing $\usimp{K}$ with $\bar{\simp{K}}$. Hence, by Proposition \ref{properties of the fine resolution} and Proposition \ref{cocompactness of 2d resolutions} we get the following corollary.

\begin{corollary}
Any almost finitely presented group that acts properly (resp. freely) on a 2-dimensional \CCC  acts properly (resp. freely) and cocompactly on a 2-dimen\-sional \CCC, and in particular it is finitely presented. \qed
\end{corollary}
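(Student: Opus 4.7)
The plan is to assemble the corollary directly from the two propositions mentioned just before it, by repeating the resolution construction of Section \ref{resolutions} with $\bar{\simp{K}}$ in place of $\usimp{K}$. So first I would fix a free cocompact action of $\gp G$ on a simplicial complex $\bar{\simp K}$ with $H^1(\bar{\simp K},\Z/2\Z)=0$, which exists by almost finite presentability, and fix a proper (resp.\ free) action $\gp G \actson \oCC X$ on a 2-dimensional \CCC. Exactly as in Section \ref{resolutions}, I would choose a $\gp G$-equivariant combinatorial map $f:\bar{\simp K}\to \oCC{X}$, homotoped to be transverse to the hyperplanes and to have minimal number of squares; the hypothesis $H^1(\bar{\simp K},\Z/2\Z)=0$ is exactly what is needed (as noted after \cite{DiDu89}) to guarantee that the pullback $\uptrn{P}=\bigcup_{\ohyp h\in\oHyp H} f^{-1}(\ohyp h)$ is an honest 2-pattern on $\bar{\simp K}$, since each $f^{-1}(\ohyp h)$ is a 1-pattern.

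From $\uptrn P$ I would form the fine cube complex $\fCC X$ and the $\gp G$-equivariant map $F:\fCC X\to\oCC X$ of the fine resolution. Then I would simply invoke Proposition \ref{properties of the fine resolution}: cube and hyperplane stabilizers of $\fCC X$ are contained in those of $\oCC X$, so the action $\gp G\actson\fCC X$ inherits properness (resp.\ freeness) from $\gp G\actson\oCC X$. Next I would invoke Proposition \ref{cocompactness of 2d resolutions} (whose proof only uses cocompactness of $\gp G\actson \bar{\simp K}$ and finiteness of the pattern per triangle, both valid here) to conclude that $\gp G \actson \fCC X$ is cocompact. Thus $\gp G$ acts properly (resp.\ freely) and cocompactly on the 2-dimensional \CCC $\fCC X$.

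For the final assertion, since $\fCC X$ is a \CCC it is CAT(0) and hence contractible, so a proper cocompact cellular action of $\gp G$ on $\fCC X$ yields finite presentability of $\gp G$ by the standard Milnor--\v{S}varc/presentation-complex argument (finitely many orbits of cells and finite cell stabilizers suffice to write down a finite presentation). I do not foresee a real obstacle: all the work is in the two propositions already proved; the only point that requires any care is checking that the substitution $\usimp K\rightsquigarrow\bar{\simp K}$ does not break the proof of Proposition \ref{cocompactness of 2d resolutions}, and the only property of $\usimp K$ used there is cocompactness of the $\gp G$-action and the local finiteness of the pattern, both of which still hold.
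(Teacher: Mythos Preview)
Your proposal is correct and is essentially the paper's own proof: the corollary carries a \qed and the paragraph preceding it says precisely that one replaces $\usimp{K}$ by $\bar{\simp{K}}$ and then invokes Propositions \ref{properties of the fine resolution} and \ref{cocompactness of 2d resolutions}. The only small refinement is that, beyond cocompactness and local finiteness of the pattern, Proposition \ref{cocompactness of 2d resolutions} implicitly uses that each track separates $\bar{\simp K}$ into two fine halfspaces; this is exactly where the hypothesis $H^1(\bar{\simp K},\Z/2\Z)=0$ enters (via \cite{DiDu89}), and you already invoke it when forming $\fCC X$.
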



\section{Bounding the number of tracks in a pattern}\label{patterns}

This section focuses on proving Theorem \ref{main thm} which can be formulated as follows.
\begin{theorembis}{A'}\label{patternsbound}
Let $\simp{K}$ be a $2$-dimensional simplicial complex and $\ptrn{P}$ be a $2$-pattern on $\simp{K}$ with no parallel tracks. Then there exists an integer $D$, depending only on $\simp{K}$, such that the number of tracks in $\ptrn{P}$ is bounded by $D$.
\end{theorembis}

We begin by defining a weak notion of parallelism for adjacent hyperplanes in an interval. First, recall that a pair of non-crossing hyperplanes are \emph{adjacent} if their carrier contains a common vertex. 

\begin{definition}
Given a pair of non-crossing hyperplanes $(\hyp{h}, \hyp{k})$, a \emph{parallelism obstructing pair (\blockingpair)} is a pair of crossing hyperplanes $(\hyp{h}', \hyp{k}')$ such that $\hyp{h}'\pitchfork \hyp{k}$ but $\hyp{h}'\not\pitchfork \hyp{h}$, and $\hyp{k}'\pitchfork \hyp{h}$ but $\hyp{k}'\not\pitchfork\hyp{k}$ (see Figure \ref{blockingpair}). It is a \blockingpair in an interval $I$ if the four hyperplanes intersect $\itvl{I}$.

Two non-crossing hyperplanes $(\hyp{h}, \hyp{k})$ in an interval $\itvl{I}$ are \emph{\adjP} in $\itvl{I}$, if they are adjacent and do not have a \blockingpair in $I$.
\end{definition}

\begin{figure}[!ht]
\begin{center}
 \def\svgwidth{0.5\textwidth}
 \input{./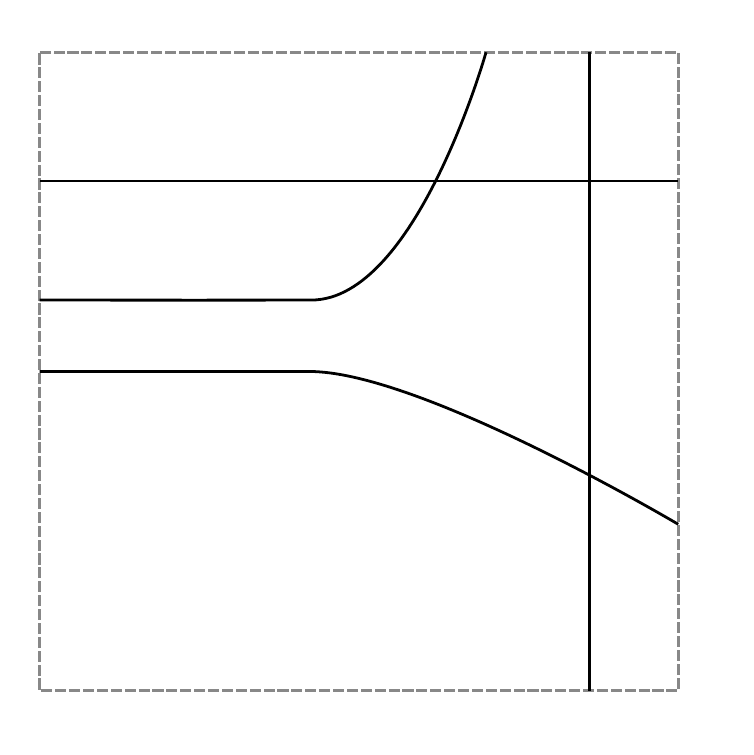_tex}
 \end{center}
 \caption{Parallelism obstructing pair (\blockingpair).}
 \label{blockingpair}
\end{figure}

In a \CCC we write $\hyp{h} <_\CCv{x} \hyp{k}$ when $\hyp{h}$ separates $\CCv{x}$ from  $\hyp{k}$.  Here, $\CCv{x}$ can be a vertex or a hyperplane. If $\CCv{x}$ is a vertex it is equivalent to say that the ultrafilter corresponding to $\CCv{x}$ satisfies $\CCv{x}(\hyp{h})<\CCv{x}(\hyp{k})$

\begin{lemma}\label{lemma1}
For every $d$, there exists $C(d)$ such that for any interval $\itvl{I}$ of a \CCC of dimension $d$ and for every vertex $\CCv{m}$ of $\itvl{I}$, the set of (non-ordered) pairs of \adjP hyperplanes separated by $\CCv{m}$ has cardinality at most $C(d)$. 
\end{lemma}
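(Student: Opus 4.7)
The strategy begins with the $L^1$-embedding $\phi : I \hookrightarrow \mathbb{R}^d$ from BCG09, which assigns to each hyperplane $\hyp{h}$ of $I$ a \emph{direction} $i(\hyp{h}) \in \{1, \ldots, d\}$, namely the coordinate axis perpendicular to $\phi(\hyp{h})$. Two same-direction hyperplanes are parallel in $\mathbb{R}^d$ and hence never cross in $I$; two distinct-direction hyperplanes do cross in $\mathbb{R}^d$, but may fail to cross in $I$ when the ambient intersection lies outside the convex subcomplex $\phi(I)$. I split the adj-P pairs separated by $\CCv{m}$ according to whether their two hyperplanes share a direction.

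For each direction $i \in \{1, \ldots, d\}$, the $i$-direction hyperplanes of $I$ form a chain totally ordered by their $x_i$-coordinate. The carrier of such a hyperplane is a slab of width one transverse to the $i$-axis, so two $i$-direction hyperplanes can share a carrier vertex (i.e.\ be adjacent) only when they are consecutive in the chain. The vertex $\CCv{m}$ sits at a single $x_i$-value, so it lies between at most one such consecutive pair. Hence same-direction adj-P pairs separated by $\CCv{m}$ contribute at most $d$.

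For an adj-P pair $(\hyp{h}, \hyp{k})$ with distinct directions $i \ne j$, the non-crossing condition forces the $2$-face of $\mathbb{R}^d$ at the would-be intersection to be absent from $\phi(I)$, while adjacency forces an L-shaped configuration of bordering cubes sharing a vertex. The no-\blockingpair hypothesis then prevents a ``second obstructing crossing'' near this missing face. Fixing a direction pair $(i,j)$ and one of the four sign-patterns specifying $\CCv{m}$'s position relative to $\hyp{h}$ and $\hyp{k}$ in the $(i,j)$-plane, I would argue that at most one such adj-P pair exists: any two candidates would together exhibit the very crossing pair $(\hyp{h}', \hyp{k}')$ forbidden by the definition. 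Summing over the $\binom{d}{2}$ direction pairs and the $4$ sign-patterns yields at most $4\binom{d}{2}$ different-direction adj-P pairs separated by $\CCv{m}$, for an overall crude bound $C(d) \le d + 4\binom{d}{2} = 2d^2 - d$.

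The main obstacle will be the uniqueness step in the different-direction case. Given two candidate cross-direction adj-P pairs in the same $(i,j)$-plane and sign-pattern, producing from them the crossing witness $(\hyp{h}', \hyp{k}')$ and verifying it intersects $I$ requires exploiting the convexity and median-closure of $\phi(I)$ inside $\mathbb{R}^d$. Locating $\hyp{h}'$ and $\hyp{k}'$ in the L-shapes provided by adjacency, then certifying their mutual crossing survives inside $\phi(I)$, is the delicate geometric core of the argument.
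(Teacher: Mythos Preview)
Your same-direction count is fine, but the different-direction case is not a proof: you explicitly label the uniqueness step as ``the main obstacle'' and ``the delicate geometric core,'' then describe what would have to be checked rather than checking it. As written this is a strategy sketch, not an argument. There is also a structural problem with the sketch itself. A \blockingpair must be a pair of \emph{crossing} hyperplanes, whereas each of your two candidate adj-P pairs $(\hyp{h}_1,\hyp{k}_1)$ and $(\hyp{h}_2,\hyp{k}_2)$ is by hypothesis \emph{non-crossing}. So neither candidate pair can itself serve as the obstructing pair; you would have to manufacture a mixed pair such as $(\hyp{h}_1,\hyp{k}_2)$ and then verify (i) that it crosses inside $I$, and (ii) that it has exactly the right pattern of crossing/non-crossing with $(\hyp{h}_2,\hyp{k}_2)$. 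None of this is automatic from the $L^1$-embedding, and you give no mechanism for it.

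The paper's route avoids all of this with a single observation: if $(\hyp{h},\hyp{k})$ is adj-P and separated by $\CCv{m}$, then at least one of $\hyp{h},\hyp{k}$ is adjacent to $\CCv{m}$. Indeed, if neither were, pick $\hyp{h}'<_{\CCv{m}}\hyp{h}$ and $\hyp{k}'<_{\CCv{m}}\hyp{k}$; the absence of facing triples in an interval forces $\hyp{h}'\pitchfork\hyp{k}$, $\hyp{k}'\pitchfork\hyp{h}$, and $\hyp{h}'\pitchfork\hyp{k}'$, so $(\hyp{h}',\hyp{k}')$ is a \blockingpair, contradiction. Since at most $2d$ hyperplanes are adjacent to $\CCv{m}$ in the $\mathbb{R}^d$-embedding, and for a fixed such $\hyp{h}$ the possible partners $\hyp{k}$ must pairwise cross (else a facing triple), there are at most $d$ of them. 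This gives $C(d)=2d^2$ with no case analysis on directions and no uniqueness claim to defend.
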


\begin{proof}
We start by proving that if a pair of \adjP hyperplanes $(\hyp{h},\hyp{k})$ in $\itvl{I}$ is separated by $\CCv{m}$, then at least one of the two hyperplane is adjacent to $\CCv{m}$.
Assume not, then there exists $\hyp{h}' <_\CCv{m} \hyp{h}$ and $\hyp{k}'<_\CCv{m} \hyp{k}$. Since the four hyperplanes belong to the interval $\itvl{I}$ and there are no facing triples in an interval, we get $\hyp{k}\pitchfork\hyp{h}'$, $\hyp{h}'\pitchfork\hyp{k}'$, and  $\hyp{k}'\pitchfork\hyp{h}$. Contradicting that $\hyp{h}$ and $\hyp{k}$ are \adjP.

Without loss of generality let $\hyp{h}$ be the hyperplane adjacent to $\CCv{m}$. Since $\itvl{I}$ is an interval of dimension $d$, thus can be embedded in $\R^d$, there are at most $2d$ hyperplanes adjacent to $\CCv{m}$ in $\itvl{I}$.

Now assume $(\hyp{h},\hyp{k})$ and $(\hyp{h},\hyp{k}')$ are \adjP and separated by $\CCv{m}$.  The hyperplanes $\hyp{k}$ and $\hyp{k}'$ cannot be facing since otherwise $\hyp{h}$, $\hyp{k}$ and $\hyp{k}'$ would be a facing triple of $\itvl{I}$.
The hyperplanes $\hyp{k}$ for which $(\hyp{h},\hyp{k})$ is an \adjP pair separated by $\CCv{m}$, pairwise cross. Hence, there are at most $d$ of them. Thus we can set $C(d)=2d^2$.
\end{proof}

\begin{lemma}\label{lemma2}
For any pair of intervals $\itvl{I}_1=\inter{\CCv{x}}{\CCv{y}_1}$ and $\itvl{I}_2 = \inter{\CCv{x}}{\CCv{y}_2}$ in a \CCC  $\CC{X}$ of dimension $2$, there are at most four pairs of hyperplanes intersecting $\itvl{I}_1$ and $\itvl{I}_2$, \adjP in $\itvl{I}_1$ but not in $\itvl{I}_2$.
\end{lemma}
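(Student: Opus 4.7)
The plan is to use the median vertex $\CCv{m}:=\mathrm{med}(\CCv{x},\CCv{y}_1,\CCv{y}_2)$ as an anchor and reduce the counting to a bound on hyperplanes adjacent to $\CCv{m}$, then invoke 2-dimensionality together with the argument from Lemma~\ref{lemma1}.

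First, observe that since adjacency of hyperplanes is a global property, the hypothesis forces $(\hyp{h},\hyp{k})$ to be adjacent, to have no \blockingpair\ in $\itvl{I}_1$, and to admit some \blockingpair\ $(\hyp{h}',\hyp{k}')$ in $\itvl{I}_2$. Because no \blockingpair\ exists in $\itvl{I}_1$, at least one of $\hyp{h}',\hyp{k}'$ fails to meet $\itvl{I}_1$; up to the symmetric swap $\hyp{h}\leftrightarrow\hyp{k}$, $\hyp{h}'\leftrightarrow\hyp{k}'$, which preserves the unordered pair $\{\hyp{h},\hyp{k}\}$, we may assume this is $\hyp{h}'$. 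Using the standard identification $\itvl{I}_1\cap \itvl{I}_2=[\CCv{x},\CCv{m}]$, we have $\hyp{h},\hyp{k}\in [\CCv{x},\CCv{m}]$, while $\hyp{h}'\in [\CCv{m},\CCv{y}_2]\setminus [\CCv{x},\CCv{m}]$.

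Next, the crucial point is that the global crossing $\hyp{h}'\pitchfork\hyp{k}$ occurs at a unique square $\CCc{C}\subseteq\itvl{I}_2$ that straddles $\CCv{m}$: since $\hyp{k}$ separates $\CCv{x}$ from $\CCv{m}$ but $\hyp{h}'$ does not, $\CCc{C}$ lies in neither $[\CCv{x},\CCv{m}]$ nor $[\CCv{m},\CCv{y}_2]$. Tracking the orientations of the four vertices of $\CCc{C}$ with respect to $(\hyp{k},\hyp{h}')$, exactly one vertex shares $\CCv{m}$'s orientation. I would argue that this vertex is $\CCv{m}$ itself, so that $\hyp{k}$ (and $\hyp{h}'$) is adjacent to $\CCv{m}$ in $[\CCv{x},\CCv{m}]$. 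The argument goes by contradiction: if a hyperplane $\hyp{l}\in [\CCv{x},\CCv{m}]$ lies strictly between $\hyp{k}$ and $\CCv{m}$, then either $\hyp{l}\pitchfork\hyp{h}'$, contradicting the maximality of $\hyp{k}$ among hyperplanes in $[\CCv{x},\CCv{m}]$ crossing $\hyp{h}'$, or $\hyp{l}\not\pitchfork\hyp{h}'$, in which case a suitable replacement for $\hyp{h}'$ lying in $\itvl{I}_1$ (obtainable from the no-\blockingpair\ hypothesis combined with $\hyp{k}' \pitchfork \hyp{h}$) assembles a \blockingpair\ for $(\hyp{h},\hyp{k})$ entirely inside $\itvl{I}_1$, contradicting adj-P.

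With $\hyp{k}$ forced to be adjacent to $\CCv{m}$ in $[\CCv{x},\CCv{m}]$, the 2-dimensionality of $[\CCv{x},\CCv{m}]$ yields at most $2$ such hyperplanes (the two $\CCv{x}$-directed edges out of $\CCv{m}$ in the $\R^2$-embedding), hence at most $2$ candidates for $\hyp{k}$. For each such $\hyp{k}$, the possible partners $\hyp{h}$ forming an adj-P pair with $\hyp{k}$ in $\itvl{I}_1$ pairwise cross by the argument in the second paragraph of the proof of Lemma~\ref{lemma1}, and being pairwise crossing in a 2-dimensional interval they are at most $2$. Altogether this gives the claimed bound of $2\cdot 2=4$ pairs. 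The main obstacle is making the forced-adjacency argument precise: it requires carefully producing, from any would-be intermediate hyperplane, an explicit \blockingpair\ in $\itvl{I}_1$ contradicting the hypothesis.
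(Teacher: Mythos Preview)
Your central claim---that the hyperplane $\hyp k$ of the pair (the one crossed by the escaping member $\hyp h'$ of the \blockingpair) is forced to be adjacent to $\CCv m$---is false, so the argument has a genuine gap. Here is a counterexample. Let $\itvl I_2=\inter{\CCv x}{\CCv y_2}$ be the $2$-dimensional interval on five hyperplanes $\hyp h,\hyp k',\hyp k,\hyp l,\hyp h'$ with exactly the crossings $\hyp h\pitchfork\hyp k'$, $\hyp k'\pitchfork\hyp h'$, $\hyp k\pitchfork\hyp h'$, $\hyp l\pitchfork\hyp h'$ and nestings $\hyp h<_{\CCv x}\hyp k<_{\CCv x}\hyp l$, $\hyp k'<_{\CCv x}\hyp k$, $\hyp h<_{\CCv x}\hyp h'$; this is realised as a strip of four squares. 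Let $\CCv m$ be the vertex separated from $\CCv y_2$ only by $\hyp h'$, and set $\itvl I_1=\inter{\CCv x}{\CCv m}$. Then $(\hyp h,\hyp k)$ is adjacent and \adjP in $\itvl I_1$ (nothing in $\itvl I_1$ crosses $\hyp k$, so no \blockingpair can exist there), while $(\hyp h',\hyp k')$ is a \blockingpair for $(\hyp h,\hyp k)$ in $\itvl I_2$ with $\hyp h'\notin\itvl I_1$. Yet $\hyp k$ is not adjacent to $\CCv m$, since $\hyp l$ separates them.

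Your case split for an intermediate $\hyp l$ is also off. If $\hyp k<_{\CCv x}\hyp l$ in $\inter{\CCv x}{\CCv m}$ and $\hyp l\not\pitchfork\hyp h'$, then $\hyp l<_{\CCv x}\hyp h'$ in $\itvl I_2$, whence $\hyp k<_{\CCv x}\hyp h'$, contradicting $\hyp h'\pitchfork\hyp k$. So your second case is vacuous; only $\hyp l\pitchfork\hyp h'$ can occur, and there is no ``maximality of $\hyp k$'' available to contradict, since $\hyp k$ is a fixed member of the given pair rather than a hyperplane you chose. The example above shows the step cannot be repaired by a more careful argument.

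The paper proceeds differently: instead of pinning each such hyperplane to $\CCv m$, it takes two pairs $(\hyp h_1,\hyp k_1)$ and $(\hyp h_2,\hyp k_2)$ of the required type and proves, via a case analysis using both halves of the \blockingpairs together with a minimality choice for the escaping hyperplane, that the hyperplanes closer to $\CCv m$ must cross one another. This weaker conclusion still bounds their number by $2$ in dimension $2$, after which the same $2\times 2$ count finishes.
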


\begin{proof}
Denote by $\CCv{m}$ the median point of $\CCv{x}$, $\CCv{y}_1$ and $\CCv{y}_2$. Note that every hyperplane intersecting $\itvl{I}_1$ and $\itvl{I}_2$ separates $\CCv{m}$ from $\CCv{x}$ and that no hyperplane separates $\CCv{m}$ from $\CCv{x}$ and $\CCv{y}_2$. Without loss of generality we assume throughout that a pair of \adjP hyperplanes $(\hyp{h},\hyp{k})$ is such that $\hyp{h}<_\CCv{m} \hyp{k}$ (or equivalently $\hyp{k}<_\CCv{x} \hyp{h}$).

Note that two crossing hyperplanes that intersect a common interval, cross inside the interval. Thus if a pair of hyperplanes is \adjP in $\itvl{I}_1$ but not in $\itvl{I}_2$, then at least one of the hyperplanes of the \blockingpair in $\itvl{I}_2$ does not intersect $\itvl{I}_1$. Since $\hyp{h'}<_\CCv{x} \hyp{h}$,  $\hyp{h}'$ has to intersect $\itvl{I}_1$. Therefore the hyperplane $\hyp{k}'$ separates $\CCv{y_2}$ from $\CCv{x}$ and $\CCv{m}$ (see Figure \ref{Crossing}).
\begin{figure}[!ht]
\begin{center}
 \def\svgwidth{0.7\textwidth}
 \input{./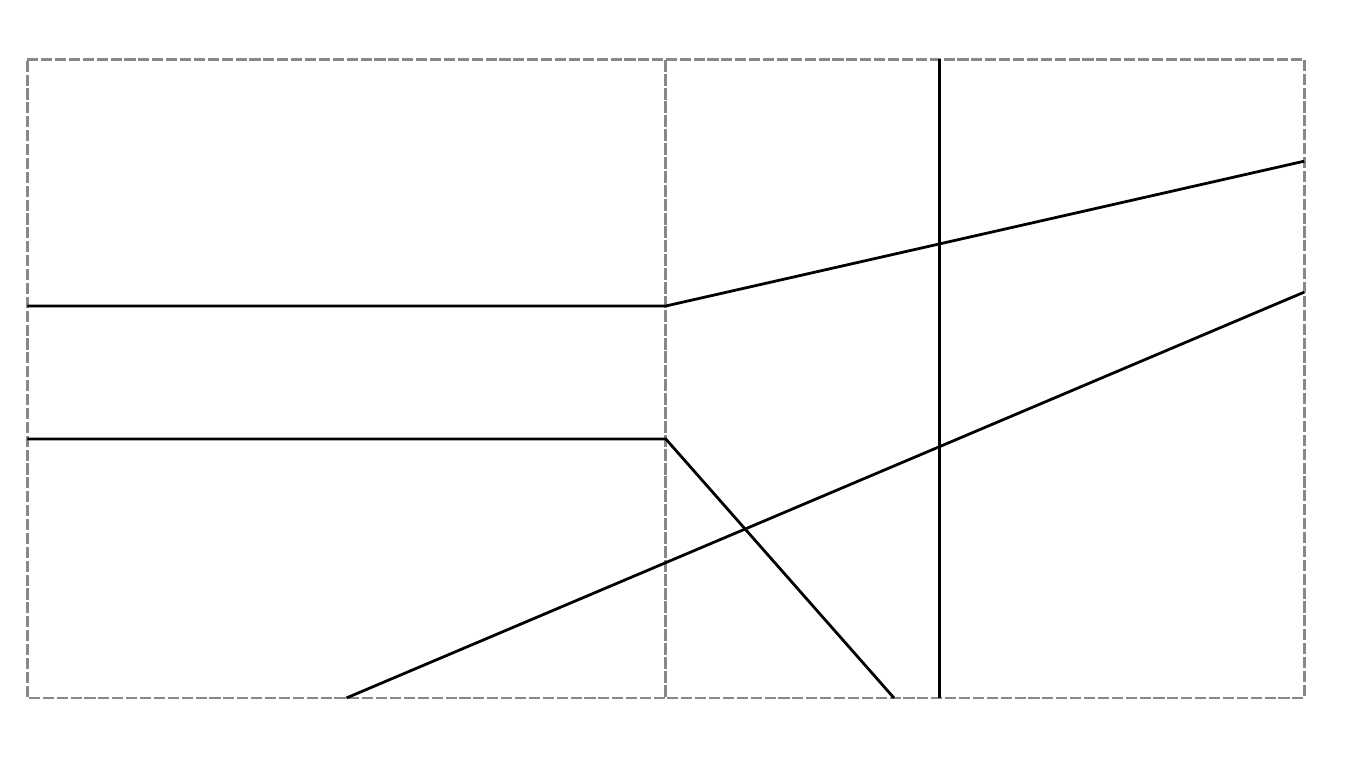_tex}
 \end{center}
 \caption{The hyperplane $\hyp{k}'$ has to separate $\CCv{m}$ and $\CCv{y}_2$}
 \label{Crossing}
\end{figure}

Suppose there exists two pairs of hyperplanes $(\hyp{h}_1,\hyp{k}_1)$ and $(\hyp{h}_2,\hyp{k}_2)$, \adjP in $\itvl{I}_1$, but not in $\itvl{I}_2$. These pairs are adjacent and non-crossing in $\itvl{I}_1$ and thus also in $\itvl{I}_2$. Therefore, there exist two \blockingpairs $(\hyp{h}'_1,\hyp{k}'_1)$ and $(\hyp{h}'_2,\hyp{k}'_2)$ for $(\hyp{h}_1,\hyp{k}_1)$ and $(\hyp{h}_2,\hyp{k}_2)$ respectively. Moreover they can be chosen such that $\hyp{k}'_1$ and $\hyp{k'}_2$ are minimal with respect to $\CCv{x}$.

We show that $\hyp{h}_1\pitchfork\hyp{h}_2$. Then as we are in dimension 2 each $\hyp{h}_i$ belong to at most $2$ pairs (the other hyperplanes have to be adjacent to $\hyp{h}_i$ so intersects). This would give the bound of $4$.

Suppose that  $\hyp{h}_1\leq_{\CCv{m}}\hyp{h}_2$.

As $\hyp{h}_1\leq_\CCv{m} \hyp{h}_2$, the interval $\inter{\CCv{m}}{\CCv{y}_2}$ and the hyperplane $\hyp{h}_2$ are separated by $\hyp{h}_2$. The hyperplane $\hyp{k}'_2$ intersects both $\inter{\CCv{m}}{\CCv{y}_2}$ and $\hyp{h}_2$ hence it crosses $\hyp{h}_1$. Thus, as $\CC{X}$ is two dimensional, $\hyp{k}'_1$ and $\hyp{k}'_2$ cannot cross. We have two cases:

Case 1. If $\hyp{k}'_2<_\CCv{m} \hyp{k}'_1$. Then the interval $\inter{\CCv{m}}{\CCv{x}}$ and the hyperplane $\hyp{k}'_1$ are separated by $\hyp{k}'_2$. Thus $\hyp{h}'_1$ and $\hyp{k}'_2$ cross. Again, because the \CCC is two dimensional, the hyperplanes $\hyp{k}'_2$ and $\hyp{k}_1$ cannot cross. Thus the pair $(\hyp{h}'_1,\hyp{k}'_2)$ is a \blockingpair for $(\hyp{h}_1,\hyp{k}_1)$, contradicting the minimality of $(\hyp{h}_2,\hyp{k}_2)$.

Case 2. If $\hyp{k}'_1\leq_\CCv{m} \hyp{k}'_2$. First notice that as $\hyp{h}_2$ intersects $\inter{\CCv{m}}{\CCv{x}}$ and $\hyp{k}'_2$, the hyperplane $\hyp{k}'_1$ crosses $\hyp{h}_2$. This implies that $\hyp{h}_2$ and $\hyp{k}_1$ are distinct.

 \begin{figure}[!ht]
\begin{center}
  \def\svgwidth{0.7\textwidth}
 \input{./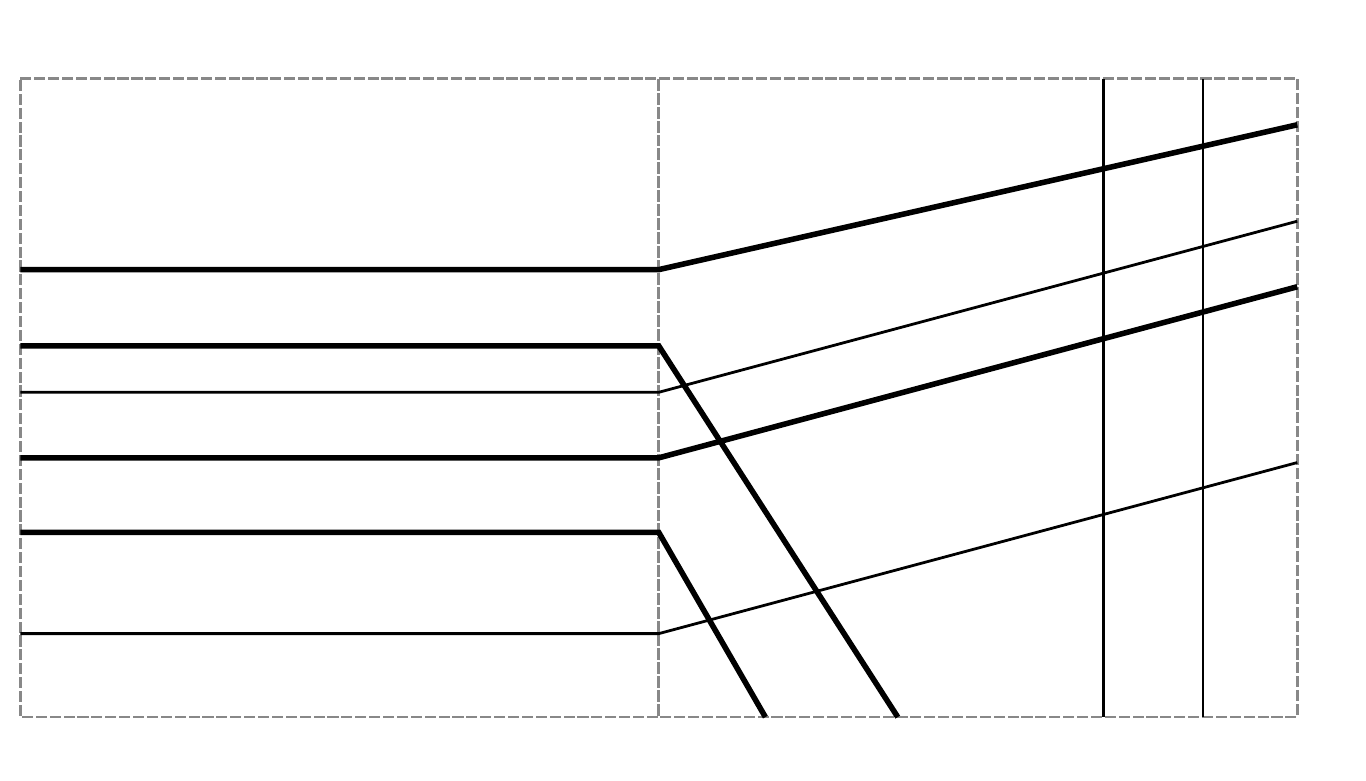_tex}
 \end{center}
 \caption{Configuration in the case $\hyp{k}'_1\leq_\CCv{m} \hyp{k}'_2$.}
 \label{Configuration2}
\end{figure}

As $\hyp{k}'_1\leq_\CCv{x} \hyp{k}'_2$ and $\hyp{k}_1<_\CCv{x} \hyp{k}'_1$, we have  $\hyp{k}_1<_\CCv{x} \hyp{k}'_2$ and $\hyp{h}'_2\pitchfork\hyp{k}_1$. Again, by dimension, the hyperplanes $\hyp{k}_1$ and $\hyp{k}_2$ do not cross.

On the one hand, the hyperplanes $\hyp{k}_1$ and $\hyp{h}_2$ cannot cross, otherwise $(\hyp{k}_1, \hyp{h}'_2)$ would be a \blockingpair for $(\hyp{h}_2,\hyp{k}_2)$, contradicting the fact that $(\hyp{h}_2,\hyp{k}_2)$ is \adjP in $\itvl{I}_1$ (see Figure \ref{Configuration2}).

On the other hand, if they do not cross, then  $\hyp{k}_1<_{\CCv{x}} \hyp{h}_2$ (they are distinct). As $\hyp{k}_1$  and $\hyp{h}_1$ are adjacent, this implies $\hyp{h}_1 =\hyp{h}_2 $. But $\hyp{k}_2$  and $\hyp{h}_2$ are adjacent, so $\hyp{k}_1$ and $\hyp{k}_2$ have to cross. 

Similarly, since $\hyp{k}_1$  and $\hyp{h}_1$ are adjacent and $\hyp{h}'_2$ crosses $\hyp{k}'_2$, the hyperplanes $\hyp{h}'_2$ and $\hyp{k}_1$ also have to cross.
But $\hyp{h}'_2$ and $\hyp{k}_2$ cross. This is impossible since the \CCC is $2$-dimensional.

\end{proof}

\begin{proof}[Proof of Theorem \ref{patternsbound}]
Let $\usimp{K}$ be the universal cover of $\simp{K}$ and $\uptrn{P}$ the pattern on $\usimp{K}$ associated to $\ptrn{P}$. Let $\cCC{X}$ be the coarse \CCC associated to the pattern $\uptrn{P}$. Since $\ptrn P$ is a $2$-pattern, the \CCC $\cCC{X}$ is a square complex.

For a vertex $\usimpv{x}$ in $\usimp{K}$ call $\cCCv{\bar{x}}$ the corresponding vertex in $\cCC{X}$. Similarly the hyperplane corresponding to a track $\utrk{t}$ in $\uptrn{P}$ is called $\chyp{h}_{\utrk{t}}$. A triangle in $\cCC{X}$ is a triplets of vertices $(\cCCv{\bar{x}}, \cCCv{\bar{y}}, \cCCv{\bar{z}})$  coming from a triangle $(\usimpv{x},\usimpv{y},\usimpv{z})$ of $\usimp{K}$.

Two tracks $\utrk{t}$ and $\utrk{t}'$ of $\uptrn P$ are  \adjP  if they cross an edge $[\usimpv{x},\usimp{y}]$ such that $\chyp{h}_{\utrk{t}}$ and $\chyp{h}_{\utrk{t}'}$ are \adjP in the interval defined by $\cCCv{\bar{x}}$ and $\cCCv{\bar{y}}$.

Note that if a pair of hyperplanes $(\chyp{h}, \chyp{k})$ in $\cCC{X}$ intersect an interval $[\cCCv{\bar{v}}, \cCCv{\bar{w}}]$ in which they are \adjP but not parallel, then:
\begin{enumerate}
\item  either there exists some triangle $(\cCCv{\bar{x}}, \cCCv{\bar{y}}, \cCCv{\bar{z}})$ such that $(\chyp{h}, \chyp{k})$ is \adjP in  $[\cCCv{\bar{x}}, \cCCv{\bar{y}}]$ but is separated by the midpoint of $(\cCCv{\bar{x}}, \cCCv{\bar{y}}, \cCCv{\bar{z}})$,
\item or  there exists some triangle $(\cCCv{\bar{x}}, \cCCv{\bar{y}}, \cCCv{\bar{z}})$ such that $(\chyp{h}, \chyp{k})$ is \adjP in  $[\cCCv{\bar{x}}, \cCCv{\bar{y}}]$, intersects $[\cCCv{\bar{x}}, \cCCv{\bar{z}}]$ but is not \adjP in it.
\end{enumerate} 

Indeed, if they are not parallel, there exists a triangle in which they are separated. Take a sequence of triangles $T_1,\dots, T_n$ such that the vertices  $\cCCv{\bar{v}}$ and $\cCCv{\bar{w}}$ are vertices of $T_1$, two consecutive triangles $T_i$ and $T_{i+1}$ share an edge $(\cCCv{\bar{v}}_i, \cCCv{\bar{w}}_i)$ crossed by $(\chyp{h}, \chyp{k})$,  and $(\chyp{h}, \chyp{k})$ are separated by a vertex of $T_n$. If it exists take the smallest $i$ such that $(\chyp{h}, \chyp{k})$ is not \adjP in the interval defined by $\cCCv{\bar{v}}_i$ and $\cCCv{\bar{w}}_i$. Then the triangle $T_i$ fits the second criterion. If such $i$ does not exists the pair $(\chyp{h}, \chyp{k})$ is \adjP in $(\cCCv{\bar{w}}_n,\cCCv{\bar{v}}_n)$, but is separated by some vertex of $T_n$ thus by the midpoint of $T_n$.

We show that for every interval $\inter{\cCCv{\bar{v}}}{\cCCv{\bar{w}}}$ and every hyperplane $\chyp h$ of $\inter{\cCCv{\bar{v}}}{\cCCv{\bar{w}}}$, either $\chyp h$ is adjacent to $\cCCv{\bar{v}}$ or there exists an other hyperplane $\chyp k$ \adjP to $\chyp h$ in $\inter{\cCCv{\bar{v}}}{\cCCv{\bar{w}}}$.

If $\chyp h$ is not adjacent to $\cCCv{\bar{v}}$ then there exists an adjacent hyperplane $\chyp k_1 <_v \chyp h$. If $\chyp k$ is not \adjP to $\chyp h$ then there exists a \blockingpair $({\hyp k}', {\hyp h}')$. We can assume that ${\hyp h}'$ is minimal with respect to $\chyp h$.  If ${\hyp h}'$ is not \adjP to $\chyp h$, there exist another \blockingpair $(\chyp h_1, {\hyp h}'_1)$. But as $\chyp k$ is minimal with respect to $\chyp h$, thus ${\hyp h}'_1\pitchfork \chyp k$. Hence, we obtain five distinct hyperplanes $\chyp h, {\hyp k}', {\hyp h}', \chyp k, {\hyp h}'_1$ which cyclically cross which is forbidden in a interval of a 2-dimensional \CCC.

Notice that in a 2-dimensional interval $\inter{\cCCv{\bar{v}}}{\cCCv{\bar{w}}}$, there are at most $2$ hyperplanes adjacent to $\cCCv{\bar{v}}$.

Since there are no parallel tracks in $\ptrn P$, the above discussion shows that a hyperplane $\chyp h$ in $\cCC{X}$  belongs to one of the following categories.

\begin{enumerate}
\item The hyperplane $\chyp h$ is adjacent to a (fixed) extremity of an edge.
\item  There exist some hyperplane $\chyp k$ and some triangle $(\cCCv{\bar{x}}, \cCCv{\bar{y}}, \cCCv{\bar{z}})$ such that $(\chyp{h}, \chyp{k})$ is \adjP in  $[\cCCv{\bar{x}}, \cCCv{\bar{y}}]$ but is separated by the midpoint of $(\cCCv{\bar{x}}, \cCCv{\bar{y}}, \cCCv{\bar{z}})$.
\item There exist some hyperplane $\chyp k$ and some triangle $(\cCCv{\bar{x}}, \cCCv{\bar{y}}, \cCCv{\bar{z}})$ such that $(\chyp{h}, \chyp{k})$ is \adjP in  $[\cCCv{\bar{x}}, \cCCv{\bar{y}}]$, intersects $[\cCCv{\bar{x}}, \cCCv{\bar{z}}]$ but is not \adjP in it.
\end{enumerate}

By lemma \ref{lemma1} each midpoint of triangle separates at most $3\times 8$ pairs of \adjP hyperplanes.
By lemma \ref{lemma2}, each triangle contains at most $24$ pairs of hyperplanes \adjP with respect to one edge but not with respect with another.
For each edge there is at most $2$ hyperplane adjacent to a (fixed) extremity.

If we denote by $E$ and $T$ be the numbers of edges and triangles respectively of $\simp K$,
then there are at most $2\times(8\times 3 + 24) \times T + 2 \times E = 96T + 2E$  orbits of hyperplanes in $\cCC{X}$ under the action of $\pi_1(\simp K)$.

Therefore, the pattern $\ptrn P$ contains at most $96T + 2E$ tracks.
\end{proof}


\section{Bounding submanifolds in surfaces and 3-manifolds}\label{surfaces}

\subsection{Patterns on surfaces} 

Let $\mfld S$ be a compact surface, with a triangulation $\simp K \homeo \mfld S$. Let $\ptrn P$ be a collection of non-homotopic essential, 2-sided, properly immersed curves and arcs such that the size of a pairwise intersecting collection of their lifts to $\uc{\mfld S}$ is at most $d$. We would like to homotope the pattern $\ptrn P$ such that $\ptrn P$ will be a $d$-pattern in $\simp K$.

By homotoping each curve to the corresponding geodesic curve in $\mfld S$, one assures that the lift to the universal cover of each curve is an embedded line, while still having that the size of a pairwise intersecting collection of their lifts to $\uc{\mfld S}$ is at most $d$.

Assume $\ptrn P$ is not a $d$-pattern, this can only occur if some arc or curve $\trk t\in\ptrn P$ has a self-returning segment, i.e, a segment $\gamma\subseteq\trk t$ properly embedded in some 2-simplex $\simpf f\subset \simp K$ such that $\partial\gamma=\gamma \cap \simpe e$ for some edge $\simpe e\subset \simpf f$. Note that in this case $\simpe{e} \nsubseteq \partial M$ otherwise $\trk t$ is non-essential. Let $\gamma\subseteq \trk t\cap \simpf f$ be a self-returning segment with innermost endpoints amongst all curves and arcs in $\ptrn P$ (i.e, a self-returning segment whose endpoints do not bound two endpoints of another self-returning segment). Homotope $\trk t$ by pushing $\gamma$ to the 2-simplex on the other side of $\simpe e$. Note that since we chose an innermost $\gamma$ this homotopy does not create any new intersections in $\uc{\mfld S}$. Hence, the collection still satisfies all the assumption, and the total number of intersections of the curves and arcs of $\ptrn P$ with the one-skeleton of $\simp K$ decreased. Thus, after finitely many such moves the resulting collection is a $d$-pattern in $\mfld S$.

Now, by the main theorem we have

\begin{theorem}
\label{thm: finiteness of pattern on surfaces}
Let $\mfld M$ be a compact surface, with a triangulation $\simp K \homeo \mfld M$. There exists a constant $C$ such that if $\ptrn S$ is a collection of non-homotopic essential, 2-sided, properly immersed curves and arcs, such that the size of a pairwise intersecting collection of lifts to $\uc{\mfld M}$ is at most 2, then $|\ptrn S|\le C$.
\end{theorem}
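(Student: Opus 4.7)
The plan is to reduce to Theorem A via the surgery procedure described in the paragraph preceding the statement. I would carry this out in three steps.

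\emph{Step 1 (Turning $\ptrn S$ into a $2$-pattern).} As outlined above the theorem, I would first homotope each curve and arc of $\ptrn S$ to a tight representative (e.g.\ a geodesic with respect to a fixed metric) so that each lift to $\uc{\mfld M}$ becomes a properly embedded line or arc and the pairwise-intersection-of-lifts bound of $2$ is preserved. Then I would iteratively push innermost self-returning segments across the edge they return to: each such move strictly decreases the number of transverse intersections with $\simp K^{(1)}$ and, by the innermost choice, creates no new intersections upstairs. The process terminates in a collection $\ptrn P$ of curves and arcs meeting every face of $\simp K$ in finitely many proper arcs with endpoints on distinct edges, i.e.\ a $2$-pattern on $\simp K$ in the sense of Section \ref{patternsdef}.

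\emph{Step 2 (Bounding parallelism classes).} I would then apply Theorem A to the $2$-pattern $\ptrn P$ to obtain a constant $C_0=C_0(\simp K)$ bounding the number of parallelism classes of tracks of $\ptrn P$.

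\emph{Step 3 (Parallel tracks come from homotopic curves).} The key point is that the map from $\ptrn S$ to the set of parallelism classes of tracks of $\ptrn P$ is injective. Suppose $\gamma_1,\gamma_2 \in \ptrn S$ admit lifts $\utrk t_1,\utrk t_2 \subseteq \usimp K$ whose associated hyperplanes are parallel in $\cCC X$. By the definition in Section \ref{patternsdef} the two coarse halfspaces agree, so no vertex of $\usimp K$ lies strictly between $\utrk t_1$ and $\utrk t_2$. Since $\uc{\mfld M}$ is a disk and the tracks are $2$-sided properly embedded lines/arcs, the region they cobound is a topological rectangle $\mathbb R \times [0,1]$ (or $[0,1]^2$ for arcs) meeting $\simp K^{(1)}$ in a sequence of disjoint proper transverse arcs. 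Projecting this rectangle to $\mfld M$ gives a (proper) homotopy from $\gamma_1$ to $\gamma_2$, where $2$-sidedness is used to rule out a M\"obius-band image in the non-orientable case. Non-homotopy of elements of $\ptrn S$ forces $\gamma_1=\gamma_2$, and hence $|\ptrn S|\le C_0$.

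The main obstacle is Step 3. While the statement ``no vertex between two parallel tracks, so they cobound a strip'' is geometrically obvious on a disk, promoting the combinatorial strip to an honest (proper) homotopy in $\mfld M$ requires some care: one must handle arcs versus closed curves, deal with endpoints on $\partial\mfld M$, and use $2$-sidedness crucially in the non-orientable setting. In essence this step repackages the classical innermost-bigon argument in the pattern/\CCC language developed in the paper.
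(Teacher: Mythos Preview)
Your proposal is correct and follows essentially the same three-step route as the paper: homotope $\ptrn S$ to a $2$-pattern via the innermost-segment surgery, apply Theorem~A to bound parallelism classes, and then argue that parallel tracks are homotopic using $2$-sidedness. The only cosmetic difference is that the paper phrases Step~3 downstairs (parallel tracks cobound an $I$-bundle region in $\mfld M$, which must be a product since the curves are $2$-sided) whereas you work upstairs with a strip in $\uc{\mfld M}$; these are the same argument.
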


\begin{proof}
By the above, one can replace the original collection of surface with homotopic curves which form a pattern on $\simp K$. By the main theorem, there exists $C$ such that the number of parallelism classes of tracks in $\ptrn P$ is bounded by $C$. Now it remains to note that if two curves (or arcs) are parallel then they are homotopic. And indeed, if they are parallel then (up to homotopy) we may assume that they bound an $I$-bundle region. But since they are 2-sided, it must imply that there exists a homotopy between them.
\end{proof}

We note that this theorem could also be proven by a simpler argument using the Euler characteristic.

\subsection{Patterns on 3-manifolds}

Recall the following definition from \cite{DiDu89}.

\begin{definition}
Let $\mfld M$ be an 3-manifold, and let $\simp K$ be a triangulation of $\mfld M$. An embedded surface $\smfld S\subset \mfld M$ is \emph{patterned} if $\smfld S \cap \simp K^{(2)}$ is a 1-pattern in $\simp K^{(2)}$ (the two skeleton of $\simp K$), and if for all 3-simplex $\simps \sigma\subset \simp K$, $\simps\sigma \cap \smfld S$ is a collection of disjoint embedded disks.
\end{definition}

Such a patterned surface $\smfld S$ is determined by $\smfld S \cap \simp K^{(2)}$  (see \cite{DiDu89}).

For our discussion let $\mfld M$ be a compact irreducible, boundary-irreducible 3-manifold. Let $\ptrn S$ be a collection of non-homotopic, incompressible, $\partial$-incom\-pres\-sible, 2-sided, properly embedded surfaces (in general position) in  $\mfld M$ , such that the size of a pairwise intersecting collection of their lifts to $\uc{\mfld M}$ is at most $d$.

We would like, as in the previous section, to homotope the surfaces such that each surface is patterned and $\ptrn S \cap \simp K^{(2)}$ is a $d$-pattern. Since the surfaces are embedded and satisfy that the size of a pairwise intersecting collection of their lifts to $\uc{\mfld M}$ is at most $d$, it is enough to prove that we can homotope them (while preserving the above properties) to surfaces such that the intersection of each one with $\simp K^{(2)}$ is a pattern. 

In the procedure defined in \cite{DiDu89}, the authors describe three types of moves, A, B and C,  transforming the embedding $f:\smfld S\to\mfld M$ to an embedding $f':\smfld S'\to\mfld M$, possibly changing the surface. 

Under our assumptions, we note that if $D$ is an embedded disk such that $D\cap\smfld S=\partial D$ then by incompressibility there exists a disk $D'\subset \smfld S$ such that $D'\cap D = \partial D'=\partial D$, thus, by irreducibility $D\cup D'$ bounds a 3-ball 
and one can homotope $\smfld S$ to $\smfld S'$ where $\smfld S' = (\smfld S \setminus D) \cup D'$. 

Similarly, if $D$ is an embedded disk whose boundary consists of two arcs $\alpha,\beta$ such that $\alpha = D\cap\smfld S, \beta=D\cap\partial \mfld M$ then by $\partial$-incompressibility there exist a disk $D'\subset \smfld S$  such that $D'\cap D = \alpha$ and 
$\partial (D\cup D') \subset \partial \mfld M$. Call $\tilde D$ the disk $D \cup D'$. Now by boundary irreducibility there exists a disk $D''\subset \partial \mfld M$ such that $D''\cap \tilde D = \partial D'' = \partial \tilde D$ and $D''\cup \tilde D''$ bounds a 3-ball and one can homotope $\smfld S$ to $\smfld S'$ where $\smfld S' = (\smfld S \setminus D) \cup D'$. 

Thus it follows that some of the moves defined in \cite{DiDu89} are not relevant in our case, and the every other can be made by a homotopy. In the figures \ref{surface},   \ref{typeA},   \ref{typeBi},   \ref{typeBii},   \ref{typeBiii} and   \ref{typeC}, we give a schematic description of some of the moves, adjusted to our case. For further details we refer the reader to \cite{DiDu89}.

By choosing innermost curves or arcs in the procedure, we guarantee that the surfaces obtained will remain embedded and will not have more intersections. This procedure terminates, see \cite{DiDu89}. Thus we obtain the following.

\begin{lemma}
\label{lem:collection to pattern}
Let $\mfld M$ and $\ptrn S$ be as above. Then one can choose representatives of $\ptrn S$ such each $\smfld S\in\ptrn S$ is patterned and $\ptrn S\cap K^{(2)}$ is a $d$-pattern.
\end{lemma}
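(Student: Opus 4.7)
The plan is to iteratively simplify the intersection $\bigcup_{\smfld{S} \in \ptrn{S}} \smfld{S} \cap \simp{K}^{(2)}$ by a sequence of homotopies realising the moves A, B, C of \cite{DiDu89}, until each surface is patterned. The obstructions come in two flavours: an arc of $\smfld{S} \cap \simpf{f}$ in a 2-simplex $\simpf{f}$ that self-returns to the same edge (or forms a closed curve on it), and a component of $\smfld{S} \cap \simps{\sigma}$ in a 3-simplex $\simps{\sigma}$ that fails to be an embedded disk. In each case, an innermost such obstruction yields an embedded disk $D \subset \mfld{M}$ whose boundary is either entirely contained in $\smfld{S}$ or consists of an arc on $\smfld{S}$ together with an arc on $\partial \mfld{M}$.

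First I would select $D$ to be innermost, where ``innermost'' is taken with respect to the entire family $\ptrn{S}$, not merely with respect to the surface $\smfld{S}$ being modified. The hypotheses then convert the Dicks--Dunwoody surgery at $D$ into an ambient homotopy, exactly as sketched in the two paragraphs preceding the statement: incompressibility together with irreducibility (respectively $\partial$-incompressibility with boundary-irreducibility) produces a 3-ball bounded by $D \cup D'$ (respectively $D \cup D' \cup D''$), across which one pushes $\smfld{S}$. The innermost choice guarantees that this 3-ball meets $\bigcup \ptrn{S}$ only along $D'$, so the homotopy introduces no new intersections among the surfaces in $\ptrn{S}$; in particular it preserves the bound $d$ on the size of pairwise-intersecting collections of lifts in $\uc{\mfld{M}}$.

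Second, I would invoke the termination argument of \cite{DiDu89}: with a suitable lexicographic complexity (for instance, the number of points of $\bigcup \ptrn{S} \cap \simp{K}^{(1)}$, followed by the number of non-disk components of $\bigcup \ptrn{S} \cap \simps{\sigma}$ over all 3-simplices), each of the moves strictly decreases the complexity, so after finitely many steps no obstructions remain. Each $\smfld{S} \in \ptrn{S}$ is then patterned, and $\ptrn{S} \cap \simp{K}^{(2)}$ is by construction a superposition of 1-patterns whose mutual intersection behaviour in $\uc{\mfld{M}}$ is unchanged from the original collection; hence it satisfies the axioms of a $d$-pattern from Section \ref{patternsdef}.

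The main obstacle is ensuring that the innermost choice and the resulting homotopy work globally across $\ptrn{S}$ rather than for a single surface in isolation: a priori, pushing $\smfld{S}$ across a 3-ball could destroy the patterned status of some other $\smfld{S}' \in \ptrn{S}$ or create a triple of pairwise-intersecting lifts beyond the allowed bound $d$. This is exactly the difficulty handled by the standard innermost-disk bookkeeping of \cite{DiDu89}, which relies only on the surfaces being embedded, 2-sided, and having controlled pairwise intersections — precisely the hypotheses we impose. Once this is in place, the verification that the final 1-complex is a $d$-pattern is a direct check against the definition in Section \ref{patternsdef}.
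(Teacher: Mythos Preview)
Your proposal is correct and follows essentially the same approach as the paper: both apply the Dicks--Dunwoody moves A, B, C, use incompressibility/irreducibility (resp.\ $\partial$-incompressibility/boundary-irreducibility) to upgrade each surgery to a homotopy across a 3-ball, make innermost choices with respect to the whole collection $\ptrn{S}$ to avoid creating new intersections, and invoke \cite{DiDu89} for termination. Your write-up is in fact slightly more explicit than the paper's about the complexity used for termination and about why the innermost choice preserves the global $d$-bound on pairwise-intersecting lifts.
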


Now, by a similar proof to that of Theorem~\ref{thm: finiteness of pattern on surfaces} using the main theorem and the previous lemma we get

\begin{theorem}
Let $\mfld M$ be a compact irreducible, boundary-irreducible 3-manifold. There exists a number $C$, depending only on $\mfld M$ such that if $\ptrn S$ is a collection of non-homotopic, $\pi_1$-injective, 2-sided, properly embedded surfaces (in general position) in  $\mfld M$ , such that the size of a pairwise intersecting collection of lifts to $\uc{\mfld M}$ is at most 2, then $|\ptrn S|\le C$.
\end{theorem}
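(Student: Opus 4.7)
The plan is to mirror the proof of Theorem~\ref{thm: finiteness of pattern on surfaces}, with an additional step passing from track-level bounds on the $2$-skeleton back to a bound on surfaces in $\mfld M$.

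Fix a triangulation $\simp K$ of $\mfld M$. By Lemma~\ref{lem:collection to pattern} applied with $d=2$, we may replace $\ptrn S$ by a homotopic collection in which each surface $\smfld S \in \ptrn S$ is patterned and the union $\ptrn P := \bigcup_{\smfld S\in\ptrn S}(\smfld S \cap \simp K^{(2)})$ is a $2$-pattern on the $2$-complex $\simp K^{(2)}$. Theorem~\ref{main thm} then provides a constant $D$, depending only on $\simp K$ and hence only on $\mfld M$, bounding the number of parallelism classes of tracks in $\ptrn P$.

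Next, I would translate this into a bound on $|\ptrn S|$. Since each patterned surface is determined by its intersection with $\simp K^{(2)}$ (the fact quoted from \cite{DiDu89}), the assignment $\smfld S \mapsto \ptrn P_{\smfld S}:=\smfld S\cap \simp K^{(2)}$ is injective on $\ptrn S$. I would then establish the key claim: if $\smfld S_1,\smfld S_2 \in \ptrn S$ produce track collections $\ptrn P_{\smfld S_1}$ and $\ptrn P_{\smfld S_2}$ that are in parallelism-bijection --- i.e.\ there is a bijection between them pairing each track with a parallel one, consistent with the $2$-sided coorientations --- then $\smfld S_1$ and $\smfld S_2$ are homotopic in $\mfld M$. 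Granting this, non-homotopic surfaces yield distinct parallelism-signatures of subsets of $\ptrn P$, and the number of such signatures is bounded by a function of $D$ and the combinatorics of $\simp K$, yielding the desired $C$.

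The main obstacle is the homotopy claim. The strategy is to build an $I$-bundle between $\smfld S_1$ and $\smfld S_2$: first use parallelism on $\simp K^{(2)}$ to produce $I$-bundle strips between matched tracks in the $2$-skeleton, then extend across each $3$-simplex $\sigma$ by pairing the disks of $\smfld S_1\cap\sigma$ with those of $\smfld S_2\cap\sigma$ through their matched boundary tracks. Irreducibility and $\partial$-irreducibility of $\mfld M$, together with $2$-sidedness of the surfaces, should force the region between matched disks to be a $3$-ball (via innermost-disk arguments in the spirit of \cite{DiDu89}), so that the resulting global $I$-bundle is a product $S\times I$, giving the desired homotopy. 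Making this gluing-across-$3$-simplices step precise, and verifying that the combined region is indeed a product bundle rather than a twisted one, is where the bulk of the technical work lies.
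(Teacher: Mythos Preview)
Your route is the paper's route --- apply Lemma~\ref{lem:collection to pattern}, invoke Theorem~\ref{main thm}, then argue parallel implies homotopic via an $I$-bundle --- but you introduce an unnecessary complication that leaves a genuine gap in the counting step. You treat $\ptrn P_{\smfld S}=\smfld S\cap\simp K^{(2)}$ as a \emph{collection} of tracks and then need to bound the number of ``parallelism-signatures''. That bound is not established: if a signature records multiplicities, nothing bounds the multiplicities; if it records only the set of parallelism classes, you cannot invoke your key claim (which requires a parallelism-\emph{bijection}, not merely equal class-sets) to conclude that two surfaces with the same signature are homotopic. The phrase ``a function of $D$ and the combinatorics of $\simp K$'' papers over this.

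The fix --- and what the paper's ``similar proof to Theorem~\ref{thm: finiteness of pattern on surfaces}'' implicitly uses --- is that for a \emph{connected} patterned surface $\smfld S$, the set $\smfld S\cap\simp K^{(2)}$ is a \emph{single} track. Indeed, $\smfld S\cap\simp K^{(2)}$ is obtained from $\smfld S$ by deleting the open disks $\smfld S\cap\operatorname{int}(\simps\sigma)$ over all $3$-simplices $\simps\sigma$, and removing disjoint open disks from a connected surface keeps it connected; since $\smfld S\cap\simp K^{(2)}$ is a $1$-pattern its tracks are disjoint, so connectedness forces it to be one track. Likewise, each lift $\utrk t$ of this track is $\tilde{\smfld S}\cap\usimp K^{(2)}$ for a lift $\tilde{\smfld S}$, so the coarse halfspaces of $\utrk t$ are exactly the two sides of $\tilde{\smfld S}$ on $\usimp K^{(0)}$. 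With this observation, the argument becomes identical to the surface case: distinct surfaces give distinct tracks, and if two of those tracks are parallel then the corresponding surfaces partition the vertices of $\tilde{\mfld M}$ identically, cobound a product $I$-bundle region (here your sketch is correct and more detailed than what the paper spells out), and are therefore homotopic. Hence $|\ptrn S|\le D$ directly, with no signature bookkeeping needed.
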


\bibliographystyle{plain}
\bibliography{main}
\newpage

\begin{figure}[!ht]
\begin{center}
 \def\svgwidth{0.7\textwidth}
 \input{./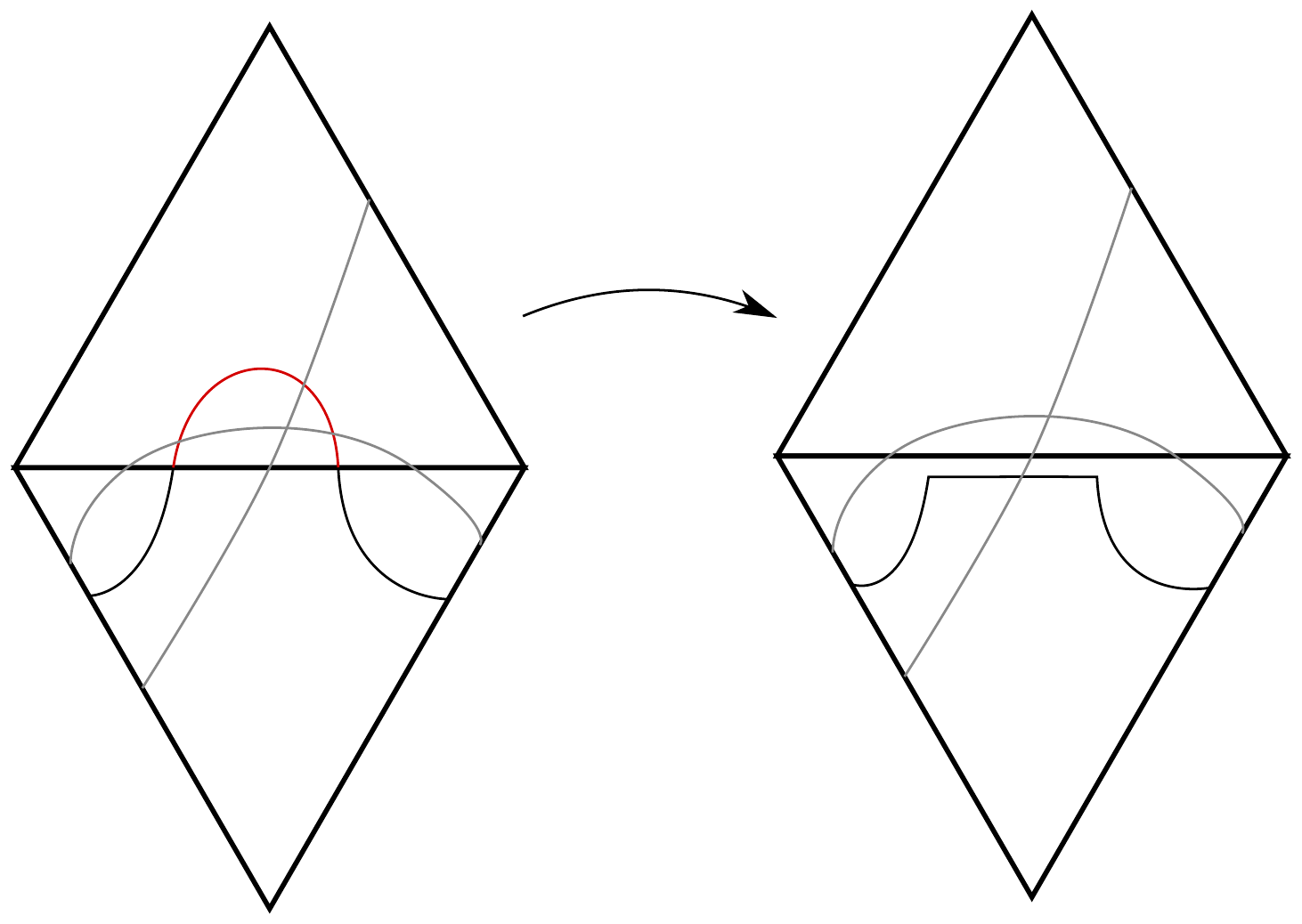_tex}
 \end{center}
 \caption{Fixing a self-returning curve in a surface}
 \label{surface}
\end{figure}

\begin{figure}[!ht]
\begin{center}
 \def\svgwidth{0.7\textwidth}
 \input{./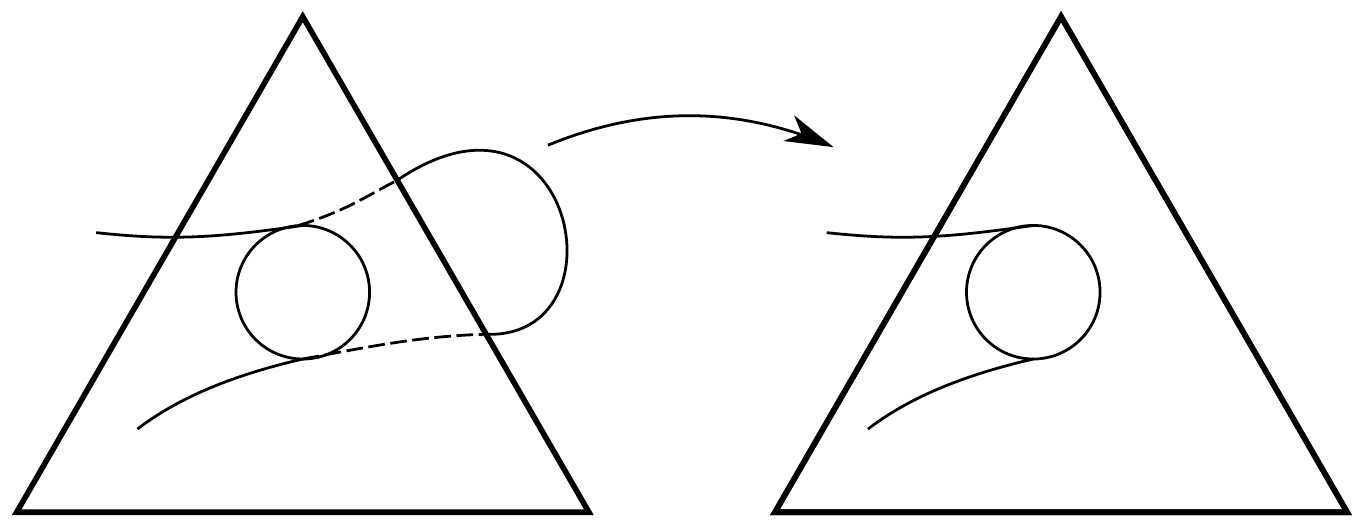_tex}
 \end{center}
 \caption{Fixing a closed simple curve in a face}
 \label{typeA}
\end{figure}

\begin{figure}[!ht]
\begin{center}
 \def\svgwidth{0.7\textwidth}
 \input{./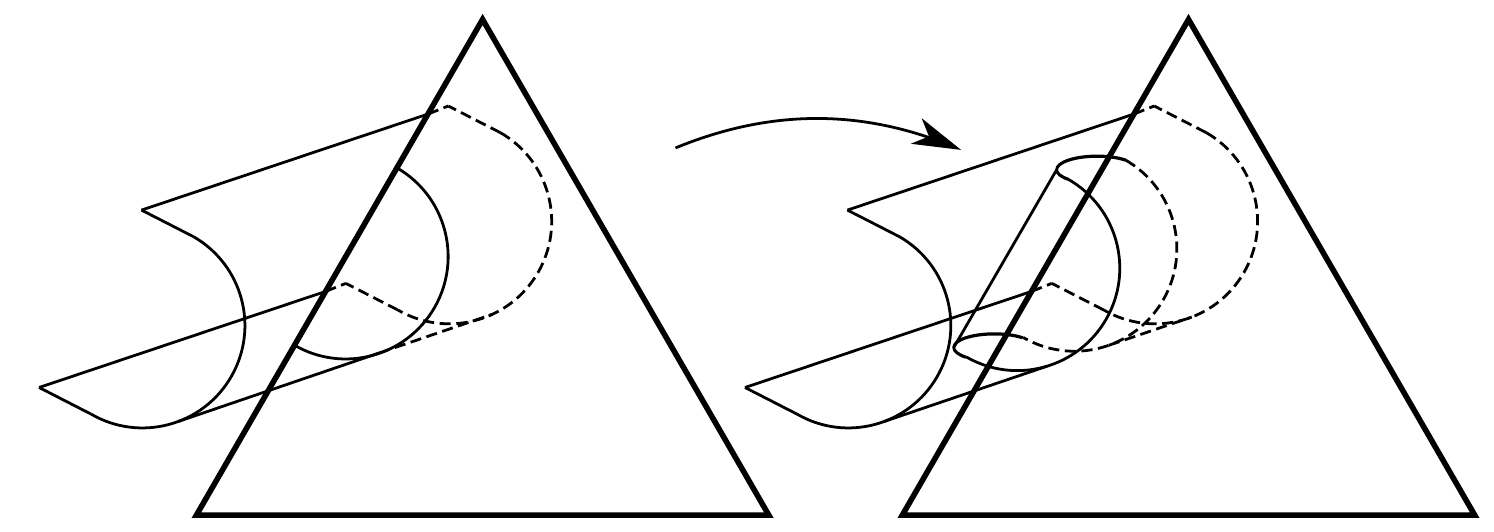_tex}
 \end{center}
 \caption{Fixing a self-returning curve $\gamma$, which returns to a non-boundary edge}
 \label{typeBi}
\end{figure}

\begin{figure}[!ht]
\begin{center}
 \def\svgwidth{0.7\textwidth}
 \input{./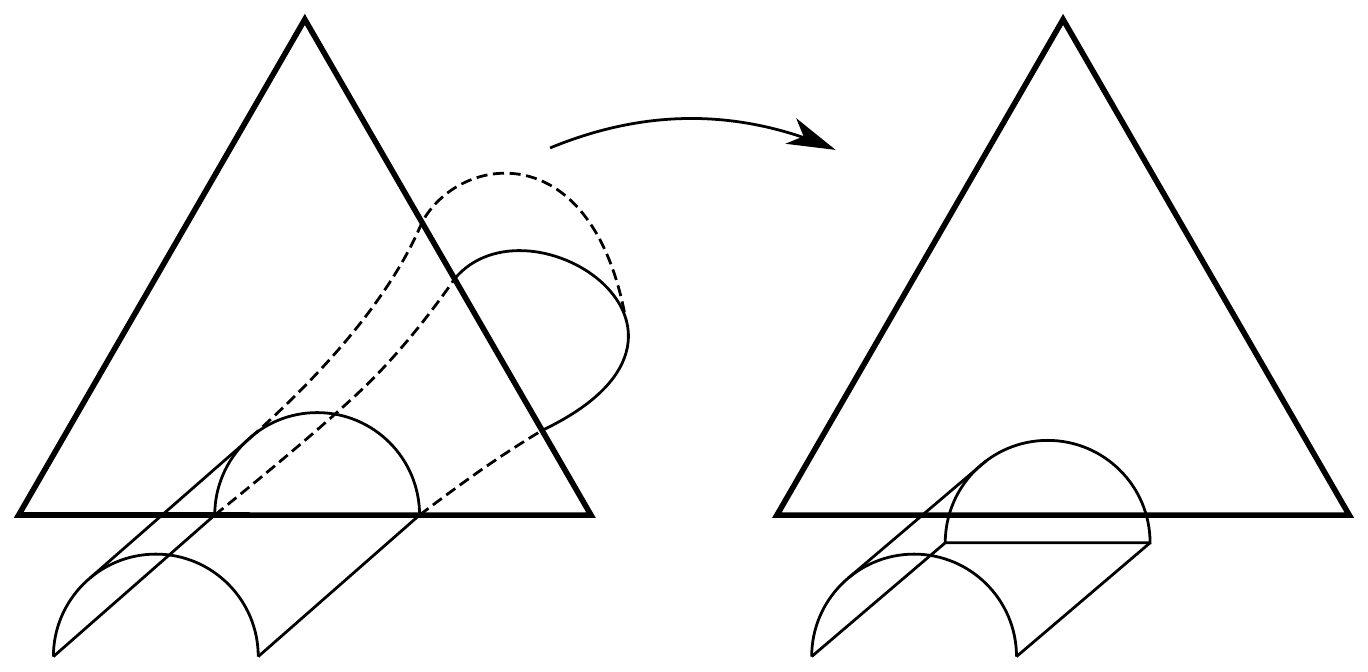_tex}
 \end{center}
 \caption{Fixing a self-returning curve $\gamma$, which is not in the boundary but returns to a boundary edge $\simpe{e}\subset \partial \mfld M$}
 \label{typeBii}
\end{figure}

\begin{figure}[!ht]
\begin{center}
 \def\svgwidth{0.7\textwidth}
 \input{./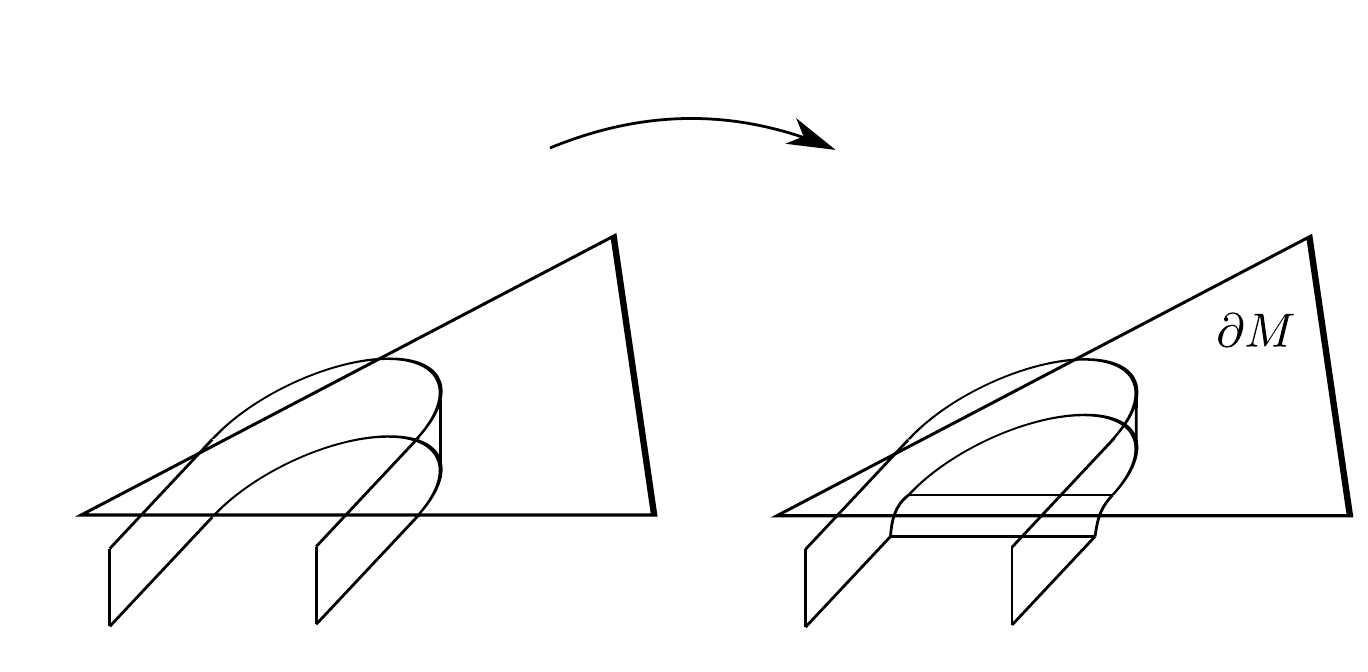_tex}
 \end{center}
 \caption{Fixing a self-returning curve $\gamma$, which is contained in the boundary of $\mfld M$}
 \label{typeBiii}
\end{figure}

\begin{figure}[!ht]
\begin{center}
 \def\svgwidth{0.7\textwidth}
 \input{./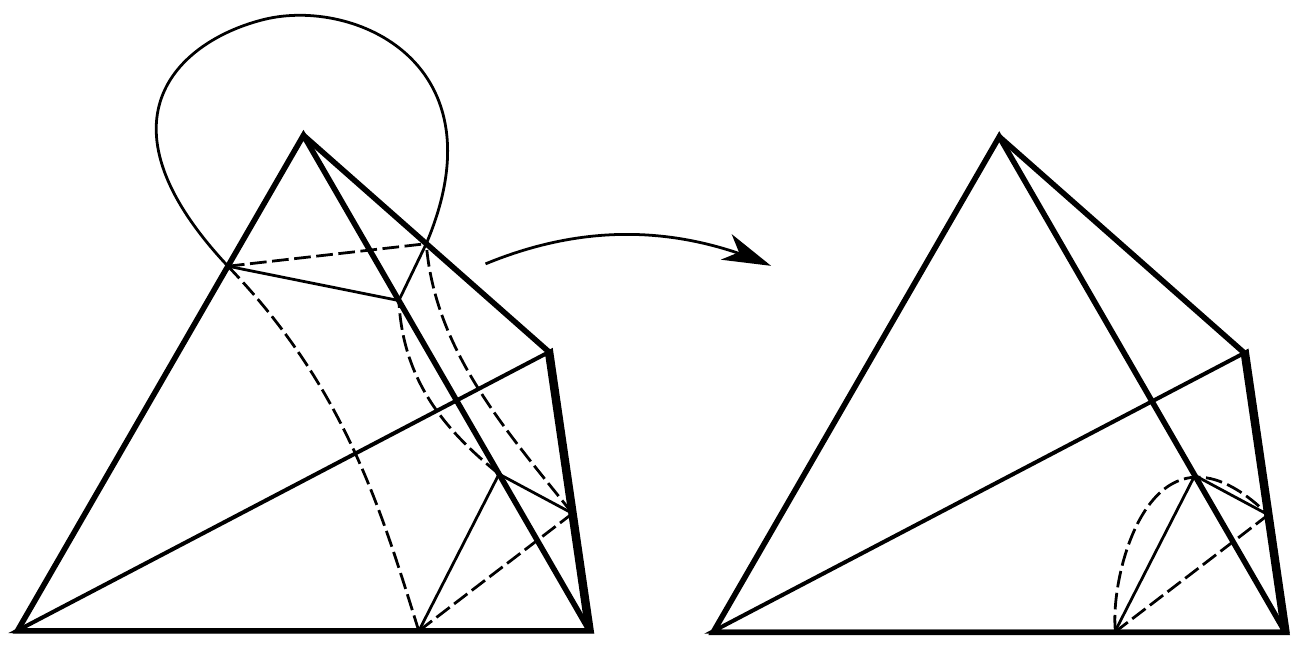_tex}
 \end{center}
 \caption{Fixing a non disc component of $\ptrn S \setminus \mfld M ^{(2)}$}
 \label{typeC}
\end{figure}

\end{document}